\newtheorem{theorem}{Theorem}[section]
\newtheorem{lemma}[theorem]{Lemma}
\newtheorem{corollary}[theorem]{Corollary}
\newtheorem{proposition}[theorem]{Proposition}
\newtheorem{remark}[theorem]{Remark}
\def\blfootnote{\xdef\@thefnmark{}\@footnotetext}
\newcommand{\R}{\mathbb{R}}
\newcommand{\be}{\begin{equation}}
\newcommand{\ee}{\end{equation}}
\definecolor{forestgreen}{cmyk}{0.91,0,0.88,0.12}
\definecolor{darkorange}{rgb}{1.00,0.55,0.00}
\definecolor{violet}{rgb}{0.5,0,0.8}
\begin{document}

\title[Comparison principles and applications...]
{Comparison principles and applications
to mathematical modelling
of vegetal meta-communities}

\author{%
  Gauthier Delvoye, Olivier Goubet, Frédéric Paccaut
}

\maketitle
\noindent\blfootnote{\textbf{MSC 2010}: 60J60, 92D25, 35B51.}
\noindent\blfootnote{\textbf{Keywords}: Comparison principle, mathematical modelling for metacommunities, Markov chains, diffusion equations}

\begin{abstract}
This article partakes of the PEGASE project
the goal of which is a better understanding of the mechanisms explaining the behaviour
of species living in a network of forest patches linked by ecological corridors (hedges for instance).
Actually we plan to study the effect of the {\it fragmentation} of the habitat
on biodiversity.
A simple neutral model for the evolution of abundances in a vegetal
metacommunity is introduced. Migration between the communities
is explicitely modelized in a deterministic way, while the reproduction
process is dealt with using Wright-Fisher models, independently within
each community. The large population limit of the model is considered.
The hydrodynamic limit of this split-step method is proved to be the solution of a partial differential
 equation with a deterministic part coming from the migration
process and a diffusion part due to the Wright-Fisher process.  Finally, the diversity of the metacommunity is adressed
through one of its { indicators}, the mean extinction time of a species. At
the limit, using classical comparison principles, the exchange process
between the communities is proved to slow down extinction.
This shows that the existence of corridors seems to be good for the biodiversity.
%\textbf{(200 to 300 words)}
\end{abstract}

\maketitle

\section{Introduction}\label{intro}

This article partakes of a research program aimed at understanding
the dynamics of a fragmented landscape composed of forest patches
connected by hedges, which are ecological corridors.
When dealing with the dynamics of a metacommunity
at a landscape scale we have to take into account
the local competition between species and the { possible}
migration of species.

We are interested here in the mathematical modelling of two species, on two forest patches
linked by some ecological corridor. We model the evolution by a {\it splitting method},
performing first the exchange process (see the definition of the corresponding Markov chain
in the sequel) on a small time step, and then we perform independently on each station
a birth/death process according to the Wright-Fisher model, and we reiterate.

Our first mathematical result is to compute the limit equation of this modelling
when the time step goes to $0$  and the size of the population diverges to $\infty$. This issue, the {\it hydrodynamic limit},
i.e. to pass from the mesoscopic scale to the macroscopic one
received increasing interest in the last decades (see for instance in various contexts
\cite{blondel}, \cite{er}, \cite{pgj}).
As our main results on extinction times { do not} require the convergence in law of the processes, instead of using a martingale problem (\cite{ethier_martingale}), we prove directly the convergence
of operators towards a diffusion semi-group (\cite{ethier_semigroup}).
We find a deterministic diffusion-convection
equation, where the drift comes from the exchange process, while the diffusion
comes from the limit of the Wright-Fisher process.
We point out here that the fact that the diffusion operator $L_d$
satisfies a non standard {\it comparison principle} (or a maximum principle) is instrumental:
{ first the comparison principle ensures the uniqueness of the limit of the approximation
process and then the definition of the Feller diffusion process. Then
this comparison principle yields our second result that} is concerned with the comparison of the extinction time
of one species for a system with exchange and a system without exchanges.
Assuming that the discrete extinction time converges, we prove that the limit is solution of the equation $-L_d \tau=1$.
Taking advantage once again of comparison principles,
we prove that the exchange process slows down the extinction time of one species.
Thus, the fragmentation of the habitat seems to be good to the biodiversity.

This article outlines as follows. In a second section
we describe the modelling at mesoscopic scale.
We couple a Wright-Fisher model for the evolution of the abundances together with an exchange process.
 The third section is devoted to the large population limit of the discrete process.
In a fourth section we discuss the issues related
to the extinction time; we compare the extinction time of one species with and without exchange process.
In a final section we draw some conclusion and prospects for ecological issues,
and we address the question of convergence in law for our model.

\section{The mathematical model}
\label{sec:1}

\subsection{Modelling the exchange between patches}
\label{subsec:1.1}

Consider two patches that have respectively the capacity to host $(N_1,N_2)$ individuals,
to be chosen into two different species $\alpha$ and $\beta$.
Set $(y_1^n,y_2^n)$ for the numbers of individuals of type $\alpha$, respectively in patch $1$ and $2$,
at time $n\delta t$, { i.e. after $n$ iterations and  $\delta t$ is the time that will be defined below.}

The exchange process is then simply modelled by
 \begin{equation}\label{echangepopulation}\begin{split}
y_1^{n+1}=(1-\kappa d \delta t)y_1^n+\kappa \delta t y_2^n, \\
y_2^{n+1}=(\kappa d \delta t)y_1^n+(1-\kappa\delta t) y_2^n,
\end{split}\end{equation}

\noindent where $\kappa$ is the instantaneous speed of exchanges
and $d =\frac{N_2}{N_1}$ represents the distortion between the patches (the ratio
between the hosting capacities); we may assume without loss of generality that $d\leq1$. With this modelling,  and assuming
that $\kappa \delta t\leq 1$, it is easy to check that

\begin{itemize}
\item The { set} $[0,N_1]\times [0,N_2]$ is { mapped into itself, i.e stable,} by the exchange process.
\item The total population of individuals of type $\alpha$, $y_1^n+y_2^n$,
is conserved.
\item If we start with only individuals of species $\alpha$ (respectively $\beta$) then
we remain with only individuals from $\alpha$ (respectively $\beta$); this
reads $(N_1,N_2)\mapsto (N_1,N_2)$ (respectively $(0,0)\mapsto (0,0)$).
\end{itemize}

Set $x=(x_1=\frac{y_1}{N_1}, x_2=\frac{y_2}{N_2}) $ belonging to $ \mathcal{D}=[0,1] ^ 2 $ for the population
densities of a species $\alpha$ on two separate patches and $x^n=(x_1^n,x_2^n)$ for these densities at time $n\delta t$. Then we have alternatively

\begin{equation}\label{echange}\begin{split}
x_1^{n+1}=(1-\kappa d \delta t)x_1^n+\kappa d\delta t x_2^n, \\
x_2^{n+1}=\kappa \delta t x_1^n+(1-\kappa\delta t) x_2^n.
\end{split}\end{equation}

\noindent This reads also  $x^{n+1}=A x^n$ where $A$ is a stochastic matrix.

Consider now the { piecewise constant} c\`adl\`ag process with jumps $X \mapsto AX$ at each time step
$\delta t$. In other words, for any continuous function $f$ defined on $\mathcal{D}=[0,1]^2$
then $P^{\rm ex}_{\delta t}(f)(x)=f(Ax)$, where $P^{\rm ex}_{\delta t}$
is the transition kernel of the exchange process.

\subsection{Wright-Fisher reproduction model}
\label{subsec:1.2}

On each patch we now describe the death/birth process
that is given by the Wright-Fisher model.
The main assumption is that the death/birth process
on one patch is {\it independent} of the other one.

Consider then the first patch that may host $N_1$ individuals.
The Markov chain is then defined by the transition matrix,
written for $z_1=\frac{j_1}{N_1} \in [0,1]$

\begin{equation}\label{trans}
\mathbb{P}(x_1^{n+1}=z_1 |  x_1^n=x_1)= \begin{pmatrix} N_1 \\ j_1\end{pmatrix} x_1^{j_1}(1-x_1)^{N_1-j_1}.
\end{equation}
\noindent Since the two Wright-Fisher processes are independent, the corresponding transition kernel reads

\begin{equation}\label{wf}
	P^{\rm wf}_{\delta t}(f)(x)=\sum_{j_1=0}^{N_1}\sum_{j_2=0}^{N_2}\begin{pmatrix} N_1 \\ j_1\end{pmatrix}\begin{pmatrix} N_2 \\ j_2\end{pmatrix}  x_1^{j_1}(1-x_1)^{N_1-j_1}x_2^{j_2}(1-x_2)^{N_2-j_2}f\left(\frac{j_1}{N_1},\frac{j_2}{N_2}\right),
\end{equation}
\noindent for any function $f$ defined on $\mathcal D=[0,1]^2$.
Notice that $P^{\rm wf}_{\delta t}$ is a two-variable version of the usual Bernstein polynomials. In the sequel, we will also use the notation $B_N(f)$ and write for the sake of conciseness
\[
\begin{pmatrix} N \\ j\end{pmatrix}  x^{j}(1-x)^{N-j}f\left(\frac{j}{N}\right)=\begin{pmatrix} N_1 \\ j_1\end{pmatrix}\begin{pmatrix} N_2 \\ j_2\end{pmatrix} x_1^{j_1}(1-x_1)^{N_1-j_1}x_2^{j_2}(1-x_2)^{ N_2 -j_2}f\left(\frac{j_1}{N_1}, \frac{j_2}{N_2}\right).
\]

\subsection{The full disrete model}
\label{subsec:1.3}
Starting from the state $x=(x_1,x_2)$, during a time step, we apply first the exchange process and then the Wright-Fisher reproduction process. In this way, the sequence of random variables $x^n$ is a Markov chain with state space $\{0,\frac{1}{N_1},\cdots,1\}\times\{0,\frac{1}{N_2},\cdots,1\}$ and the transition kernel reads as follows
\[
	\mathbb{E}(f(x^{n+1})|x^n=x)=P^{\rm wf}_{\delta t}P^{\rm ex}_{\delta t}(f)(x)=\sum_{j}\begin{pmatrix} N \\ j\end{pmatrix}  x^{j}(1-x)^{N-j}f\circ A\left(\frac{j}{N}\right)=B_N(f\circ A)(x).
\]

\section{From discrete model to continuous one}
\label{sec:3}

We consider the same scaling as for the Wright-Fisher usual
model, that is $N_1 \delta t=1$. We set $N=N_1$ in the sequel to simplify the notations.
We may consider either the c\`adl\`ag process associated to the reproduction-exchange discrete process
defined by $\overline{x}^t=x^n$ if $n\delta t \leq t < (n+1)\delta t$ or the continuous piecewise linear
function $x^t$ such that $x^t=x^n$ for $t = n\delta t$. We consider
an analogous interpolation in space in order to deal with function
that are defined on $[0,T]\times \mathcal D$ where $T>0$ is given.

We set $M=\begin{pmatrix} d & -d \\-1 & 1 \end{pmatrix}$ and then
	$A=Id -\frac \kappa N M$. For a given continuous function $f$ that vanishes
at $(0,0)$ and $(1,1)$, we now define the sequence of functions

\begin{equation}\label{espcon}
u_N(t,x)= \mathbb{E}( f(x^t) | x^0=x).
\end{equation}

\noindent We may also use analogously $\overline{u}_N(t,x)= \mathbb{E}( f(\overline{x}^t) | x^0=x)
=(P_{\delta t}^{\rm wf}P_{\delta t}^{\rm ex})^n(f)(x) .$
{ The fonctions $u_N$ and $\overline{u}_N$ represent
the average densities of the species at a macroscopic level.
If $X^N$ is the Lagrangian representation of the densities,
then $u_N$ represents the densities in Eulerian variables. }

%\noindent {\color{blue} In the sequel, we set $B_N(f)=P_{\delta t}^{\rm wf}P_{\delta t}^{\rm ex}(f)$ for the infinitesimal operator.}

\subsection{Statement of the result}
\label{subsec:2.1}

\begin{theorem}\label{limit} Let T>0 be fixed.
Assume $f$ is a function of class $C^2$ on $\mathcal D$, that vanishes at $(0,0)$ and $(1,1)$.
The sequence $u_N$ converges uniformly in $[0,T]\times \mathcal{D}$ to the
unique solution $u$
of the diffusion equation
$$\partial_t u=L_d u,$$
\noindent where $L_d$ is defined as, for $x=(x_1,x_2)$,
$$L_d u (x)= \frac{x_1(1-x_1)}{2}u_{x_1x_1}(x)+\frac{x_2(1-x_2)}{2d}u_{x_2x_2}(x)-\kappa Mx.\nabla u(x),$$
\noindent and with initial data $u(0,x)=f(x)$.
\end{theorem}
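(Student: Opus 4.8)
The strategy is to verify the three hypotheses of the Trotter–Kato–Kurtz type approximation theorem for semigroups of operators (as in \cite{ethier_semigroup}): namely that the discrete one-step operators $P_{\delta t}:=B_N(\,\cdot\circ A)$ approximate a limit generator $L_d$ on a suitable core, that $L_d$ generates a strongly continuous contraction (Feller) semigroup on the space of continuous functions on $\mathcal D$ vanishing at the two absorbing corners $(0,0)$ and $(1,1)$, and that the discrete semigroups $(P_{\delta t})^{\lfloor t/\delta t\rfloor}$ are suitably stable; the conclusion is then uniform convergence of $\overline u_N$, hence of the interpolant $u_N$, to $u(t,\cdot)=e^{tL_d}f$, which is by construction the unique solution of $\partial_t u=L_d u$ with $u(0,\cdot)=f$.

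First I would compute the consistency estimate: for $f\in C^2(\mathcal D)$ one expands
\[
\frac{1}{\delta t}\bigl(B_N(f\circ A)(x)-f(x)\bigr)
=\frac{1}{\delta t}\bigl(B_N(f\circ A)(x)-B_N(f)(x)\bigr)
+\frac{1}{\delta t}\bigl(B_N(f)(x)-f(x)\bigr).
\]
The second term is the classical Bernstein-operator expansion: with $N\delta t=1$ one has $\frac1{\delta t}(B_N f-f)\to \tfrac{x_1(1-x_1)}2 f_{x_1x_1}+\tfrac{x_2(1-x_2)}{2d} f_{x_2x_2}$ uniformly on $\mathcal D$ (the factor $1/d$ coming from $N_2=dN_1=dN$, so that $\tfrac1{\delta t}\cdot\tfrac{z_2(1-z_2)}{2N_2}=\tfrac{z_2(1-z_2)}{2d}$). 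For the first term, Taylor-expand $f\circ A$ around the identity: since $A=\mathrm{Id}-\tfrac{\kappa}{N}M$, one gets $f(Ax)-f(x)=-\tfrac{\kappa}{N}Mx\cdot\nabla f(x)+O(N^{-2})$ uniformly, and applying $B_N$ (which is a contraction and reproduces affine-in-the-error estimates) together with $\tfrac1{\delta t}\cdot\tfrac1N=1$ yields the drift term $-\kappa Mx\cdot\nabla f(x)$. Summing, $\tfrac1{\delta t}(P_{\delta t}f-f)\to L_d f$ uniformly on $\mathcal D$ for every $f\in C^2$, which is the needed approximation of generators on the core $C^2(\mathcal D)\cap\{f(0,0)=f(1,1)=0\}$.

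The analytically substantial step is to establish that $L_d$ generates a Feller semigroup on $C_0(\mathcal D):=\{f\in C(\mathcal D): f(0,0)=f(1,1)=0\}$, and here the \emph{non-standard comparison principle} alluded to in the introduction is the key input: $L_d$ is a degenerate elliptic operator (the diffusion vanishes on all four edges of the square, and the drift $-\kappa Mx$ points inward along the relevant boundary pieces in exactly the way needed), and the comparison principle gives uniqueness for the resolvent equation $\lambda u-L_d u=g$ together with the maximum-principle bound $\|u\|_\infty\le\lambda^{-1}\|g\|_\infty$; existence of solutions on a dense set follows from the consistency estimate above via the Lax-equivalence/Hille-Yosida route, since the discrete operators are themselves positive contractions preserving $C_0(\mathcal D)$ (here one uses that $A$ maps $\mathcal D$ into itself and fixes the two corners, and that Bernstein operators are positivity-preserving and fix constants). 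Thus the range condition of Hille-Yosida is met on a core and $\overline{L_d}$ is a Feller generator. Once this is in hand, the Trotter-Kato theorem applies directly: $\|(P_{\delta t})^{n}f-e^{tL_d}f\|_\infty\to 0$ uniformly for $t\in[0,T]$ whenever $n\delta t\to t$, giving uniform convergence of $\overline u_N$; the piecewise-linear interpolant $u_N$ differs from $\overline u_N$ by $O(\delta t)$ in sup-norm (one step of a contraction), so $u_N\to u$ uniformly on $[0,T]\times\mathcal D$ as well.

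The main obstacle, as signalled, is the generation/well-posedness step for the degenerate operator $L_d$ on $C_0(\mathcal D)$: one must check that the boundary behaviour of the drift $-\kappa Mx\cdot\nabla$ is compatible with the vanishing diffusion so that no boundary conditions beyond the two corner constraints are needed (Fichera-type analysis of the edges $x_i\in\{0,1\}$), and that the comparison principle genuinely upgrades to the maximum estimate $\lambda\|u\|_\infty\le\|\lambda u-L_d u\|_\infty$ required for dissipativity. The remaining pieces — the Bernstein asymptotics and the $O(\delta t)$ interpolation bound — are routine, and the stability of the discrete semigroups is immediate from $\|P_{\delta t}\|=1$.
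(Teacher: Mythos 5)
Your consistency computation is sound and coincides with the paper's Lemma \ref{infgen} (Taylor expansion for the exchange step, Voronovskaya-type asymptotics for the two-variable Bernstein operator), and the stability of the discrete operators is indeed immediate. The genuine gap is the generation step, which you flag as ``the main obstacle'' but do not actually close. To invoke a Trotter--Kato/Ethier--Kurtz approximation theorem you must know \emph{beforehand} that the closure of $L_d$ generates a strongly continuous contraction semigroup on $E=\{f\in C(\mathcal D):f(0,0)=f(1,1)=0\}$, i.e.\ you must verify the range condition that $\lambda u-L_du=g$ is solvable in $D(L_d)$ for a dense set of $g$. The comparison principle gives dissipativity and uniqueness of such solutions, but it produces no solutions; and your claim that existence ``follows from the consistency estimate via the Lax-equivalence/Hille--Yosida route'' is circular, since consistency plus stability yield convergence only once generation is already known. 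For this degenerate operator the range condition is precisely the hard analytic point: the diffusion coefficients vanish on all four edges, the natural Dirichlet data live only at the two corners, and the actual domain involves Ventsel'--Vishik-type boundary conditions on the edges --- an issue the paper explicitly declares beyond its scope. A Fichera-type classification of the edges, as you suggest, would be a substantial piece of analysis that your outline does not supply.

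The paper avoids this entirely by a compactness route: the moment estimates of Lemmas \ref{martin} and \ref{mom2} yield the equicontinuity bounds of Lemma \ref{ascoli} (Lipschitz in $x$, H\"older-$\tfrac12$ in $t$, uniformly in $N$), Ascoli's theorem extracts a uniformly convergent subsequence, the consistency Lemma \ref{infgen} allows passage to the limit in the discrete Duhamel identity to get $u(t)=f+\int_0^t L_d u(s)\,ds$, and then the comparison principle of Theorem \ref{pmax} --- proved by an explicit barrier $\varepsilon\psi+\varepsilon^2\theta+\varepsilon^3$ that needs boundary information only at $(0,0)$ and $(1,1)$ --- gives uniqueness, hence convergence of the whole sequence; the Feller semigroup is then obtained \emph{a posteriori} from the limit rather than assumed via Hille--Yosida. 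So either you carry out the generation/Fichera analysis in full, or you should replace that step by a compactness argument of this type. A minor further point: the discrepancy between $u_N$ and $\overline u_N$ is controlled through $\mathbb{E}(|x^t-\overline x^t|^2)\le CN^{-1}$ (Corollary \ref{pdc}), giving an $O(N^{-1/2})$ sup-norm bound for Lipschitz $f$, not the $O(\delta t)$ you assert; this is harmless for the conclusion but the stated rate is not justified.
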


\begin{remark}
	We may have proved that the c\`adl\`ag process associated to the reproduction-exchange process $\overline{u}_N$
converges to a diffusion equation. We will discuss this in the sequel.
Besides, we prove the convergence results for a sufficiently smooth $f$, and we will extend
in the sequel the definition of a mild solution to the equation for functions
$f$ in the Banach space $E=\{ f \in C(\mathcal D) ; f(0,0)=f(1,1)=0\}.$
{ The theory for Markov diffusion process and the related PDE equations
is well developed in the litterature (see \cite{bgl}, \cite{ethier_kurtz}, \cite{meleard}
and the references therein). The particularity of our diffusion equation is that the boundary
of the domain is only two points.}
\end{remark}

\subsection{Proof of Theorem \ref{limit}}\label{durelimite}
\label{subsec:2.2}
The proof of the theorem is divided into several lemmata.
{ The first lemma describes in a way how the discrete process is close to a martingale.
}

\begin{lemma}\label{martin}
The conditional expectation of the discrete reproduction-exchange process is
\begin{equation}\label{pisco}
\mathbb{E}(x^{n+1} | x^n)=Ax^n.
\end{equation}
\noindent As a consequence $\mathbb{E}(x^{n+1}-x^n | x^n)= o(1)$ when $N$ diverges to $\infty$.
\end{lemma}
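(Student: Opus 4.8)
The plan is to establish \eqref{pisco} by a direct computation using the tower property of conditional expectation, exploiting the fact that the one-step transition is the composition of the exchange map $A$ with the Wright-Fisher reproduction, and then to derive the $o(1)$ estimate from the scaling $N\delta t=1$ and $A=\mathrm{Id}-\frac{\kappa}{N}M$.

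First I would recall that, conditionally on $x^n$, the next state is obtained by first applying the deterministic exchange $x^n \mapsto Ax^n$ and then sampling two independent Wright-Fisher transitions from the coordinates of $Ax^n$. Writing $Ax^n=(w_1,w_2)$, the key elementary fact is that a Wright-Fisher draw is unbiased: if $j_1 \sim \mathrm{Bin}(N_1,w_1)$ then $\mathbb{E}(j_1/N_1)=w_1$, and similarly for the second patch; this is nothing but $B_{N}(\mathrm{id})=\mathrm{id}$ applied coordinatewise, i.e. the Bernstein operator reproduces affine functions. Hence $\mathbb{E}(x^{n+1}\mid x^n) = \mathbb{E}\big(\mathbb{E}(x^{n+1}\mid x^n, Ax^n)\mid x^n\big) = Ax^n$, since the inner expectation is exactly $(w_1,w_2)=Ax^n$ by independence of the two reproduction steps and unbiasedness of each. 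This gives \eqref{pisco}.

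For the consequence, I would subtract: $\mathbb{E}(x^{n+1}-x^n\mid x^n) = (A-\mathrm{Id})x^n = -\frac{\kappa}{N}Mx^n$. Since $M$ is a fixed matrix and $x^n\in\mathcal D=[0,1]^2$ is bounded, the right-hand side is $O(1/N)$ uniformly in the state, hence $o(1)$ as $N\to\infty$. (One can also phrase this as $O(\delta t)$ via the scaling $N\delta t=1$, which is the natural "drift of order $\delta t$" one expects for a diffusion approximation.)

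I do not expect any serious obstacle here: the lemma is essentially bookkeeping, and the only point requiring a little care is making precise the conditioning in the two-step (exchange then reproduction) construction — namely that after applying $A$ the two Wright-Fisher samplings are independent with the correct means — so that the tower property applies cleanly. Everything else is the unbiasedness of binomial sampling and the boundedness of $\mathcal D$.
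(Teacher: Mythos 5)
Your proof is correct and follows essentially the same route as the paper: the identity $\mathbb{E}(x^{n+1}\mid x^n)=Ax^n$ comes from the unbiasedness of the binomial (i.e.\ the Bernstein operator reproducing affine functions) combined with the linearity of $A$, and the consequence follows from $A-\mathrm{Id}=-\frac{\kappa}{N}M=O(N^{-1})$ on the bounded set $\mathcal D$. The only cosmetic difference is the order in which you compose the two steps (exchange then reproduction, versus the paper's kernel $B_N(f\circ A)$ which samples the Wright--Fisher step from $x^n$ and then applies $A$), but since $A$ is linear both orderings give the same conditional mean, so this is immaterial.
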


\begin{proof}

Using the properties of the Bernstein polynomials,
	$$  \mathbb{E}(x^{n+1} | x^n)=\sum_{j}\begin{pmatrix} N \\ j\end{pmatrix} (x^n)^{j}(1-x^n)^{N-j}A\begin{pmatrix} \frac{ j_1}{ N_1} \\ \frac{ j_2}{ N_2}\end{pmatrix}
		=Ax^n.$$

\noindent Then the proof of the lemma is completed,
observing that $A-Id=o(1)$.

\end{proof}

%{\color{forestgreen}
%\begin{rmk}
%Actually Lemma \ref{martin} asserts that $x^n$
%is "asymptotically close" to a true martingale: this
%is related to the notion of mixingale \cite{McLeish}.
%\end{rmk}
%}

The following lemma is useful to prove that $x^t$ and $\overline{x}^t$
are close.

\begin{lemma}\label{mom2}
There exists a constant $C$ such that
$$\mathbb{E}(| x^{n+1}-x^n|^2)\leq C N^{-1}.$$
\end{lemma}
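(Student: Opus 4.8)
The plan is to bound $\mathbb{E}(|x^{n+1}-x^n|^2)$ by splitting the one-step increment into the exchange part and the Wright-Fisher part. Starting from a state $x^n$, the exchange process first moves it deterministically to $Ax^n$, and then the Wright-Fisher reproduction acts independently on each coordinate. So I would write $x^{n+1}-x^n = (x^{n+1}-Ax^n) + (Ax^n - x^n)$ and estimate the two terms separately, using $|a+b|^2 \le 2|a|^2 + 2|b|^2$.

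For the first term, conditionally on $x^n$ the point $Ax^n$ lies in $\mathcal D$, and $x^{n+1}$ is obtained from $Ax^n=(z_1,z_2)$ by drawing each coordinate from a Wright-Fisher transition, i.e. $x_i^{n+1} = \frac{1}{N_i}\mathrm{Bin}(N_i,z_i)$ independently. Hence $\mathbb{E}(x_i^{n+1}\mid x^n) = z_i$ and $\mathrm{Var}(x_i^{n+1}\mid x^n) = \frac{z_i(1-z_i)}{N_i} \le \frac{1}{4N_i}$. Summing over $i=1,2$ and recalling $N_1=N$, $N_2=dN$ with $d\le 1$, this gives $\mathbb{E}(|x^{n+1}-Ax^n|^2 \mid x^n) \le \frac{1}{4N} + \frac{1}{4dN} = \frac{C_1}{N}$. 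For the second term, since $A = Id - \frac{\kappa}{N}M$, we have $Ax^n - x^n = -\frac{\kappa}{N}Mx^n$, and because $x^n \in \mathcal D$ is bounded and $M$ is a fixed matrix, $|Ax^n - x^n|^2 \le \frac{C_2}{N^2} \le \frac{C_2}{N}$. Taking expectations of the conditional bounds and combining yields $\mathbb{E}(|x^{n+1}-x^n|^2) \le \frac{C}{N}$ for a constant $C$ depending only on $\kappa$, $d$, which is the claim.

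There is no real obstacle here; the only point requiring a little care is making sure the conditioning is handled cleanly — one should condition on $x^n$, use the independence of the two Wright-Fisher coordinates and the fact that they are conditionally independent of the (deterministic) exchange step, apply the binomial variance formula, and then take the outer expectation. The boundedness of $\mathcal D$ makes the drift term harmless. An alternative, slightly slicker route is to note $x^{n+1} - x^n = (x^{n+1} - \mathbb{E}(x^{n+1}\mid x^n)) + (Ax^n - x^n)$ using Lemma \ref{martin}, so the first term is a conditionally centered random variable whose conditional second moment is exactly the sum of the Wright-Fisher variances bounded by $\frac{1}{4N}+\frac{1}{4dN}$, and the second term is the $O(1/N)$ drift; this is essentially the same computation organized around the martingale decomposition already established.
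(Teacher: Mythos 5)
Your argument is correct and is essentially the paper's: both proofs reduce to the conditional mean $\mathbb{E}(x^{n+1}\mid x^n)=Ax^n$ from Lemma \ref{martin}, the $O(1/N)$ binomial (Bernstein) variance, and $\|A-Id\|=O(1/N)$, the paper simply expanding $|x^{n+1}-x^n|^2$ and computing $\mathbb{E}(|x^{n+1}|^2\mid x^n)$ via Bernstein polynomials instead of using your bias--variance split around $Ax^n$. The only caveat, immaterial to the bound, is that the paper's kernel $B_N(f\circ A)$ corresponds to performing the Wright--Fisher draw from $x^n$ and then applying $A$, which would merely multiply your conditional variance bound by the bounded factor $\|A\|^2$.
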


\begin{proof}

\noindent Since $|A\frac jN|^2=|\frac jN|^2(1+O(||A-Id||)$, then the following conditional expectation reads
 $$\mathbb{E}(| x^{n+1}|^2 |x^n)=\sum_{j}\begin{pmatrix} N \\ j\end{pmatrix} x^{j}(1-x)^{N-j}|A\frac jN|^2=
|x^n|^2+O(||A-Id||).$$

We expand the $\ell^2$ norm in $\R^2$ as
$$ | x^{n+1}-x^n|^2=| x^{n+1}|^2-2(x^n,x^{n+1})+|x^n|^2. $$
We first have by linearity and by the Lemma \ref{martin} above that

$$\mathbb{E}((x^{n+1},x^{n})  | x^n)=(Ax^n,x^n).$$

\noindent Therefore
\begin{equation}\label{chile2}
\mathbb{E}(| x^{n+1}-x^n|^2| x_n)=2(x^n,x^n-Ax^n)+O(||I-A||)=O(||I-A||)
\end{equation}

\noindent that completes the proof of the lemma.

\end{proof}

\noindent The next statement is a consequence of the inequality
$|\overline{x}^t-x^t|\leq |x^n-x^{n+1}|$ for $t\in (n\delta t, (n+1)\delta t)$
and of the previous lemma

\begin{corollary}\label{pdc}
The processes $x^t$ and $\overline{x}^t$ are asymptotically close, i.e.
there exists a constant $C$ such that
$$ \mathbb{E}(|\overline{x}^t-x^t|^2)\leq C N^{-1}.$$
\end{corollary}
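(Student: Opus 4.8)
The plan is to deduce the corollary directly from the pathwise inequality quoted immediately before the statement together with Lemma~\ref{mom2}, so that essentially no new estimate is required. First I would fix $t\in[0,T]$ and let $n$ be the unique integer with $n\delta t\le t<(n+1)\delta t$. By the very definition of the two interpolations, $\overline{x}^t=x^n$ is constant on $[n\delta t,(n+1)\delta t)$, while $x^t$ is the affine interpolation between $x^n$ and $x^{n+1}$ on that interval, i.e. $x^t=x^n+\theta\,(x^{n+1}-x^n)$ with $\theta=(t-n\delta t)/\delta t\in[0,1)$. Subtracting gives $|\overline{x}^t-x^t|=\theta\,|x^{n+1}-x^n|\le|x^{n+1}-x^n|$ almost surely, which is precisely the inequality announced in the text.

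Next I would square this inequality and take expectations, obtaining $\mathbb{E}(|\overline{x}^t-x^t|^2)\le\mathbb{E}(|x^{n+1}-x^n|^2)$, and then invoke Lemma~\ref{mom2} to bound the right-hand side by $CN^{-1}$. Since the bound in Lemma~\ref{mom2} ultimately comes from \eqref{chile2} via the tower property, together with $\|I-A\|=\tfrac{\kappa}{N}\|M\|$, the constant $C$ depends only on $\kappa$ and $M$, and in particular not on $n$. This yields the estimate uniformly in $t\in[0,T]$, as claimed.

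I do not expect a genuine obstacle here: the corollary is a two-line consequence of the cited pathwise inequality and Lemma~\ref{mom2}. The only point that deserves a word is the uniformity in $t$ (equivalently in $n$), which I would emphasise is automatic because neither the pathwise comparison nor the right-hand side of Lemma~\ref{mom2} depends on the index $n$. If one instead wanted the stronger statement $\mathbb{E}\big(\sup_{t\le T}|\overline{x}^t-x^t|^2\big)\le CN^{-1}$, this would no longer be immediate: naively summing the increments over the $\lfloor T/\delta t\rfloor=O(N)$ time steps loses a factor, so one would need a martingale maximal inequality instead. But for the stated pointwise-in-$t$ bound nothing of the sort is needed.
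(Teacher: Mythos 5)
Your argument is exactly the paper's: the corollary is deduced from the pathwise bound $|\overline{x}^t-x^t|\le|x^{n+1}-x^n|$ (coming from the piecewise-constant versus affine interpolation) followed by squaring, taking expectations, and applying Lemma~\ref{mom2}, with the constant independent of $n$. Your additional remarks on uniformity in $t$ and on the stronger supremum bound are sound but not needed for the stated result.
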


\noindent As a consequence, when looking for the limit when $N$
diverges towards $+\infty$ of the process, we may either
work with $x^t$ or $\overline{x}^t$.

The next lemma is a compactness result on the bounded sequence
$u_N$ defined in \eqref{espcon}.

\begin{lemma}\label{ascoli}
There exists a constant $C$ that depends on $||f||_{lip}$ and on $T$ such that
for any, $x,y$ in $\mathcal D$ and $s,t$ in $[0,T]$,
$$ |u_N(t,x)-u_N(t,y)| \leq C |x-y|, $$
$$ |u_N(t,x)-u_N(s,x)|\leq C |t-s|^\frac12 . $$
\end{lemma}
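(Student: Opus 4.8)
The plan is to establish the two estimates separately, exploiting the representation $u_N(t,x)=\mathbb{E}(f(x^t)\mid x^0=x)$ together with the regularity assumption on $f$ and the moment bounds obtained in Lemma~\ref{mom2} and Corollary~\ref{pdc}.

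For the spatial Lipschitz estimate, first I would use the linearity of the dynamics of the exchange step and the classical fact that Bernstein operators are nonexpansive on Lipschitz functions: more precisely, one checks that if $g$ is Lipschitz with constant $\mathrm{Lip}(g)$, then $B_N(g)$ is Lipschitz with constant at most $\mathrm{Lip}(g)$, and $g\circ A$ is Lipschitz with constant at most $\|A\|\,\mathrm{Lip}(g)=(1+O(1/N))\mathrm{Lip}(g)$. Iterating the transition kernel $P^{\rm wf}_{\delta t}P^{\rm ex}_{\delta t}$ exactly $n=t/\delta t=tN$ times then shows that $x\mapsto \overline u_N(t,x)$ is Lipschitz with constant at most $(1+O(1/N))^{tN}\,\mathrm{Lip}(f)\le e^{Ct}\mathrm{Lip}(f)$. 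Since $u_N(t,\cdot)$ is obtained from the $\overline u_N$ at consecutive time steps by linear interpolation in $t$, the same Lipschitz bound (uniform for $t\in[0,T]$) passes to $u_N$. This gives the first inequality with $C$ depending only on $\|f\|_{\mathrm{lip}}$ and $T$.

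For the temporal $\tfrac12$-Hölder estimate, I would reduce first to the càdlàg process: by Corollary~\ref{pdc} and $|f(x^t)-f(\overline x^t)|\le \mathrm{Lip}(f)\,|x^t-\overline x^t|$, together with Cauchy--Schwarz, it suffices to bound $|\overline u_N(t,x)-\overline u_N(s,x)|$. Writing $s\in[m\delta t,(m+1)\delta t)$ and $t\in[n\delta t,(n+1)\delta t)$ with $m\le n$, the Markov property gives $\overline u_N(t,x)-\overline u_N(m\delta t,x)=\mathbb{E}\bigl(g(x^m)-g(x^0)\mid x^0=x\bigr)$ for $g=\overline u_N(t-m\delta t,\cdot)$, which is Lipschitz uniformly in the time argument by the first part. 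Hence $|\overline u_N(t,x)-\overline u_N(m\delta t,x)|\le C\,\mathbb{E}|x^m-x^0|$. Now I would control $\mathbb{E}|x^m-x^0|$ by $\bigl(\mathbb{E}|x^m-x^0|^2\bigr)^{1/2}$ and bound the second moment: using the near-martingale structure of Lemma~\ref{martin} and the one-step second-moment bound $\mathbb{E}(|x^{k+1}-x^k|^2\mid x^k)=O(\|I-A\|)=O(1/N)$ from \eqref{chile2}, a telescoping/Minkowski argument over the $m\le tN$ steps yields $\mathbb{E}|x^m-x^0|^2\le C m/N\le C t$, so $\mathbb{E}|x^m-x^0|\le C\sqrt{t-s+\delta t}$. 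Combining, and absorbing the $O(\delta t)=O(1/N)$ from the interpolation and from replacing $s,t$ by the nearest grid points, gives $|u_N(t,x)-u_N(s,x)|\le C|t-s|^{1/2}$ for $|t-s|\ge \delta t$; for $|t-s|<\delta t$ one uses instead the linear-interpolation definition directly together with the one-step bound.

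The main obstacle I anticipate is the careful bookkeeping in the second-moment estimate for $\mathbb{E}|x^m-x^0|^2$: one cannot simply add the $n$ one-step bounds of size $O(1/N)$ naively as if the increments were orthogonal, because the conditional expectation of an increment is $Ax^k-x^k=O(1/N)$ rather than $0$, so there is a systematic drift contributing cross terms. The right way around this is to split each increment into its conditional-mean part (total contribution $O(m/N)=O(t)$, summable because each term is $O(1/N)$ and there are $O(tN)$ of them) and its martingale part (whose squared increments do add up, giving again $O(m/N)=O(t)$); Minkowski's inequality in $L^2$ then delivers the clean bound $\bigl(\mathbb{E}|x^m-x^0|^2\bigr)^{1/2}=O(\sqrt t)$. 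Everything else — the nonexpansivity of Bernstein operators on Lipschitz functions, the effect of composition with $A$, and the passage through the piecewise-linear interpolation — is routine.
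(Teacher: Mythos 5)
Your proposal follows essentially the same route as the paper: the spatial Lipschitz bound comes from the fact that one step of the kernel $P^{\rm wf}_{\delta t}P^{\rm ex}_{\delta t}$ enlarges the Lipschitz constant by at most a factor $\|A\|=1+O(1/N)$ (the paper phrases this through the derivative bound $|\partial_{x_1}B_N(f\circ A)|\leq N\|A\|\,\omega(f,\frac1N)$, you through the Lipschitz-nonexpansivity of Bernstein operators — the same fact), iterated $O(TN)$ times to give $e^{CT}\|f\|_{lip}$; and the time estimate rests on the second-moment bound $\mathbb{E}|x^n-x^m|^2\leq C\frac{n-m}{N}$, where your martingale-plus-drift decomposition plays exactly the role of the paper's direct treatment of the cross terms via $\mathbb{E}(y^j\mid x^j)=(A-Id)x^j$ in \eqref{rhum1}--\eqref{rhum2}.

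However, one step of your time estimate is mis-stated and, as written, does not yield the claimed bound. With $g=\overline u_N(t-m\delta t,\cdot)$ the Markov property gives $\mathbb{E}(g(x^m)-g(x^0)\mid x^0=x)=\overline u_N(t,x)-\overline u_N(t-m\delta t,x)$, not $\overline u_N(t,x)-\overline u_N(m\delta t,x)$; and the quantity you then bound, $\mathbb{E}|x^m-x^0|^2\leq C\frac mN$, is of order $s$, not $t-s$, so the asserted conclusion $\mathbb{E}|x^m-x^0|\leq C\sqrt{t-s+\delta t}$ does not follow from it. The repair is immediate and is what the paper does: write
$(P^{\rm wf}_{\delta t}P^{\rm ex}_{\delta t})^nf-(P^{\rm wf}_{\delta t}P^{\rm ex}_{\delta t})^mf=(P^{\rm wf}_{\delta t}P^{\rm ex}_{\delta t})^{n-m}h-h$ with $h=(P^{\rm wf}_{\delta t}P^{\rm ex}_{\delta t})^mf$, use the uniform Lipschitz bound on $h$ from the first part, and apply your moment estimate to the increment over the $n-m$ steps separating $s$ from $t$ (equivalently to $x^n-x^m$, by time-homogeneity), which gives $C\sqrt{(n-m)/N}\leq C\sqrt{|t-s|+\delta t}$. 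A milder point: $u_N(t,\cdot)$ is not literally the linear interpolation in $t$ of $\overline u_N$ at grid times, since $f$ is nonlinear; the spatial Lipschitz bound nevertheless transfers because, conditioning on the last grid point, $u_N(t,\cdot)$ is $(P^{\rm wf}_{\delta t}P^{\rm ex}_{\delta t})^n$ applied to a function whose Lipschitz constant is at most $(1+\frac CN)\|f\|_{lip}$ — a gloss comparable to the paper's own shortcut at this point.
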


{
\begin{remark}
Since the constants $C$ do not depend on $N$
we can infer letting $N\rightarrow \infty$
some extra regularity results for $u$, assuming that $f$ is Lipschitz.
\end{remark}
}

\begin{proof}
%Due to Corollary \ref{pdc} we just have to
%prove Lemma \ref{ascoli} for $\overline{u}_N$ instead of $u_N$.
We begin with the first estimate. Introduce $n$ such that $n \delta t \leq t <(n+1)\delta t$.
Set $y^t$ for the process that starts from $y=y^0$.
$$ |x^t-y^t|\leq \max( |x^n-y^n|,  |x^{n+1}-y^{n+1}|), $$
\noindent therefore, proving the first inequality for
$\overline{u}_N$ (which amounts to controlling $|x^n-y^n|)$) will imply the inequality for $u_N$.
Due to the properties
of Bernstein's polynomials we have that

\begin{equation}\label{home1}
|\partial_{x_1}P^{\rm wf}_{\delta t}(f)(Ax)|\leq N ||A|| \omega(f,\frac{1}{N}),
\end{equation}

\noindent where $\omega(f,\frac{1}{N})$ is the modulus of continuity of $f$.
Then, using that $||A-Id||\leq C N^{-1}$, we infer that

\begin{equation}\label{home2}
|\partial_{x_1}P^{\rm wf}_{\delta t}(f)(Ax)|\leq ||f||_{lip} (1+\frac{C}{N}).
\end{equation}

\noindent Iterating in time we have that,

\begin{equation}\label{home3}
|\partial_{x_1}(P^{\rm wf }_{\delta t }P^{\rm ex}_{\delta t})^m(f)(x)|\leq ||f||_{lip} (1+\frac{C}{N})^m\leq \exp(CT) ||f||_{lip}.
\end{equation}

\noindent The other derivative is similar and then we infer from this computation
that the first inequality in the statement of Lemma \ref{ascoli} is proved.

We now proceed to the proof of the second one.
Introduce the integers $m,n$ such that
$m\delta t\leq s < (m+1)\delta t$ and $n\delta t\leq t < (n+1)\delta t$.
Using that
$$ |x^t-x^s|^2\leq 9 (|x^{m+1}-x^s|^2+|x^{m+1}-x^n|^2+|x^n-x^t|^2),$$
\noindent and that $|x^n-x^t|\leq (t-\frac {n}{\delta t}) |x^n-x^{n+1}|$
we just have to prove the inequality for $\frac {t}{ \delta t}$ and $\frac {s} {\delta t}$ in $\mathbb{N}$.
Introduce the increment $y^j=x^{j+1}-x^j$.
We have that, for $m\leq i,j\leq n$

\begin{equation}\label{rhum1}
\mathbb{E}(|x^n-x^m|^2)=\sum_j \mathbb{E}( |y^j|^2)+2\sum_{i<j} \mathbb{E}(y^i,y^j).
\end{equation}

\noindent On the one hand, by Lemma \ref{mom2} we have that the first term
in the right hand side of \eqref{rhum1} is bounded by above by $\frac{C(m-n)}{N}$.
On the other hand, using that the $\mathbb{E}(y^j | x^j)=(A-Id)x^j$
then

\begin{equation}\label{rhum2}
\sum_{i<j} \mathbb{E}(y^i,y^j)=\sum_{i<j}\mathbb{E}(y^i,(A-Id)x^j)=\sum_j\mathbb{E}(x^{j}-x^{m},(Id-A)x^j).
\end{equation}

\noindent Since $||Id-A||\leq C N^{-1}$ then the right hand side of \eqref{rhum2}
is also bounded by above by $\frac{C(m-n)}{N}$.
This completes the proof of the lemma.

\end{proof}

\noindent Thanks to Ascoli's theorem, up to a subsequence
extraction, $u_N$ converges uniformly to a continuous function
$u(t,x)$. We now prove that $u$ is solution
of a diffusion equation whose infinitesimal generator
is defined as the limit of $N(P^{\rm wf}_{\delta t}P^{\rm ex}_{\delta t}-Id)$.

\begin{lemma}\label{infgen}
Consider $f$ a function of class $C^2$ on $\mathcal D$ that vanish at $(0,0)$ and $(1,1)$.
Then
$$ \lim_{\delta t \rightarrow 0^+} N(P^{\rm wf}_{\delta t}P^{\rm ex}_{\delta t}(f(x))-f(x))=\lim_{N\rightarrow \infty} N(B_N(f\circ A(x)-f(x))=L_d f(x),$$
\noindent where $L_d $ is defined in Theorem \ref{limit}.
\end{lemma}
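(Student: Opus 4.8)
The plan is to compute the limit by Taylor-expanding $f\circ A$ around $x$, then using the known second-moment properties of the two-dimensional Bernstein operator $B_N$. The key point is that $B_N$ reproduces the mean exactly and produces variances of order $N^{-1}$ with no covariance between the two coordinates (because the two Wright-Fisher processes are independent), while the map $A = Id - \tfrac{\kappa}{N}M$ is itself an $O(N^{-1})$ perturbation of the identity. Matching the $O(N^{-1})$ terms from these two sources gives exactly $L_d$.

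First I would write $g = f\circ A$ and apply the classical Bernstein identities on $\mathcal D = [0,1]^2$: for the coordinate functions $B_N(\mathrm{pr}_i)(x) = x_i$, and $B_N\big((\mathrm{pr}_i - x_i)^2\big)(x) = \tfrac{x_i(1-x_i)}{N_i}$, with the cross term $B_N\big((\mathrm{pr}_1-x_1)(\mathrm{pr}_2-x_2)\big)(x) = 0$ by independence of the two binomial factors in \eqref{wf}. Recalling the scaling $N_1\delta t = 1$, $N = N_1$ and $d = N_2/N_1$, we have $N_1 = N$ and $N_2 = dN$, so the variances are $\tfrac{x_1(1-x_1)}{N}$ and $\tfrac{x_2(1-x_2)}{dN}$. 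Then I would Taylor-expand $g$ with integral remainder to second order around $x$, apply $B_N$ term by term, and conclude
$$N\big(B_N(g)(x) - g(x)\big) = \frac{x_1(1-x_1)}{2}\,g_{x_1x_1}(x) + \frac{x_2(1-x_2)}{2d}\,g_{x_2x_2}(x) + o(1),$$
the first-order term vanishing because $B_N$ reproduces affine-in-each-coordinate means exactly, and the remainder being controlled uniformly by the modulus of continuity of the second derivatives of $f$ together with the fact that $B_N$ concentrates mass near the diagonal (the fourth moments being $O(N^{-2})$).

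Next I would pass from $g = f\circ A$ back to $f$. Since $A = Id - \tfrac{\kappa}{N}M$, we have $g(x) = f(x) - \tfrac{\kappa}{N}(Mx)\cdot\nabla f(x) + O(N^{-2})$ uniformly on $\mathcal D$ (using $f\in C^2$ and boundedness of $M$ on the compact $\mathcal D$); likewise $g_{x_ix_i} = f_{x_ix_i} + O(N^{-1})$. Substituting, the term $-\tfrac{\kappa}{N}(Mx)\cdot\nabla f(x)$ gets multiplied by $N$ and contributes the drift $-\kappa Mx\cdot\nabla f(x)$, while the correction to $g_{x_ix_i}$ contributes only $O(N^{-1})$ after the extra factor $N$ is absorbed — wait, one must be careful: the drift term enters as $N\cdot(-\tfrac{\kappa}{N}(Mx)\cdot\nabla f) = -\kappa Mx\cdot\nabla f$, which is exactly the claimed convection term, and the $O(N^{-2})$ pieces of $g-f$ vanish after multiplication by $N$. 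Collecting everything gives $N(B_N(f\circ A)(x) - f(x)) \to L_d f(x)$ as claimed, uniformly in $x$.

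The main obstacle is making the error terms genuinely uniform in $x \in \mathcal D$, and in particular checking that the Taylor remainder does not blow up near the boundary points $(0,0)$ and $(1,1)$: one needs that the second-order remainder, integrated against the Bernstein weights, is $o(N^{-1})$, which follows from the uniform continuity of $D^2 f$ on the compact $\mathcal D$ combined with the estimate $B_N(|{\cdot} - x|^3)(x) = O(N^{-3/2})$ (or simply $B_N(|{\cdot}-x|^2)(x)=O(N^{-1})$ against the modulus of continuity of $D^2f$). The vanishing of $f$ at the two corners plays no role in this particular lemma — it will matter only later for the boundary behaviour of the semigroup — so here the computation is, modulo the uniformity bookkeeping, a direct two-dimensional adaptation of the classical computation of the generator of the Wright-Fisher diffusion from Bernstein operators.
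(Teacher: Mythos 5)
Your argument is correct, and it reaches the same limit by a genuinely more self-contained route than the paper. The paper first Taylor-expands the exchange step, writing $P^{\rm ex}_{\delta t}f=f-\kappa\delta t\,(Mx\cdot\nabla f)+O((\delta t)^2)$, then uses positivity and contractivity of $P^{\rm wf}_{\delta t}$ to split off the drift, controls $P^{\rm wf}_{\delta t}(Mx\cdot\nabla f)-Mx\cdot\nabla f$ by the standard $\sqrt{\delta t}$ Bernstein estimate, and finally invokes a Voronovskaya-type theorem from the literature for the diffusion part, first for tensor products $f_1(x_1)f_2(x_2)$ and then for general $f$ by density. You instead expand $f\circ A$ to second order directly under $B_N$ and compute the Bernstein moments by hand (exact reproduction of the mean, variances $x_i(1-x_i)/N_i$, zero covariance by independence, fourth moments $O(N^{-2})$), folding the drift into the same expansion via $f(Ax)-f(x)=-\tfrac{\kappa}{N}Mx\cdot\nabla f(x)+O(N^{-2})$. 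What your version buys is that it re-proves the two-dimensional Voronovskaya expansion rather than citing it, dispenses with the tensor-product density step, and makes the uniformity in $x$ visible directly from the moment bounds; the one extra bookkeeping item, which you flag, is that $g=f\circ A$ depends on $N$, harmless since $D^2(f\circ A)$ is uniformly bounded and equicontinuous in terms of $D^2f$. What the paper's version buys is brevity and a clean conceptual separation of drift and diffusion through the contraction property of the Bernstein operator. One small caution on your remainder estimate: since $f$ is only $C^2$, the bound via $B_N(|\cdot-x|^3)=O(N^{-3/2})$ is not directly applicable (it would require a Lipschitz second differential); the correct route is the one you mention in parentheses, namely the modulus of continuity of $D^2f$ combined with the second and fourth central moments in the usual splitting argument, which indeed yields a remainder $o(N^{-1})$ uniformly on $\mathcal D$. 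You are also right that the vanishing of $f$ at $(0,0)$ and $(1,1)$ is not used in this computation.
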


\begin{proof}
Due to Taylor formula

\begin{equation}\label{taylor}
P^{\rm ex}_{\delta t}(f)(x)=f(x)-\kappa \delta t (Mx.\nabla f)(x) +O((\delta t)^2),
\end{equation}

\noindent where $(O(\delta t)^2)$ is valid uniformly in $x$ in $\mathcal D$.

\noindent Using that the linear operator $P^{\rm wf}_{\delta t}$ is positive and bounded by $1$ we then have

\begin{equation}\label{taylor2}
P^{\rm wf}_{\delta t}P^{\rm ex}_{\delta t}(f)(x)=(P^{\rm wf}_{\delta t}f)(x)-\kappa \delta t P^{\rm wf}_{\delta t}(Mx.\nabla f)(x) +O((\delta t)^2),
\end{equation}

{
\noindent The well-known properties of Bernstein polynomials (see \cite{bus}) entail
that uniformly in $x$
$$ [P^{\rm wf}_{\delta t}(Mx.\nabla f)(x)-Mx.\nabla f(x)|\leq C \sqrt{\delta t}. $$
On the other hand, the operator $P^{\rm wf}_{\delta t}$
is the tensor product of two one-dimensional Bernstein operators.
Then by Voronovskaya-type theorem (see \cite{bus}), for
$f(x)=f_1(x_1)f_2(x_2)$ we have the uniform convergence of $N( (P^{\rm wf}_{\delta t}f)(x)-f(x))$
to $\frac{x_1(1-x_1)}{2}f_{x_1x_1}+\frac{x_2(1-x_2)}{2d}f_{x_2x_2}$. By density
of the linear combinations of tensor products $f_1(x_1)f_2(x_2)$ this result
extend to general $f$ as
\begin{equation}\label{taylor3}
\lim_{\delta t \rightarrow 0^+}\frac{P^{\rm wf}_{\delta t}(f)(x)-f(x)}{\delta t}=\frac{x_1(1-x_1)}{2}f_{x_1x_1}+\frac{x_2(1-x_2)}{2d}f_{x_2x_2}.
\end{equation}
}

\noindent Denoting $\Delta_d$ the diffusion operator defined by the right hand side of \eqref{taylor3},
the Kolmogorov limit equation of our coupled Markov process is

\begin{equation}\label{taylor4}
\partial_t u -\Delta_d u=-\kappa (Mx.\nabla u)(x),
\end{equation}

\noindent with initial data $u(0,x)=f(x)$. Let us observe that
$u$, the limit of $\mathbb{E}(f(x^t) | x^0=x)$, vanishes at two points $(0,0)$ and $(1,1)$ in the boundary $\partial \mathcal D$.

\end{proof}

We now complete the proof of the Theorem. Considering
$f$ such that the convergence in Lemma \ref{infgen} holds. Then, for $n\leq t N<n+1$,

%\begin{equation}\label{liguria}
%\overline{u}_N(t,x)=f(x)+\sum_{k=1}^{n-1}\int_{(k-1)\delta t}^{k\delta t}( N(P^{\rm wf}_{\delta t}P^{\rm ex}_{\delta t}(\overline{u}_N(t-k\delta t,.))(x)ds.
%\end{equation}

\begin{equation}\label{liguria2}
\overline{u}_N(t,x)=f(x)+\sum_{k=0}^{n-1}\int_{k\delta t}^{(k+1)\delta t}( N (P^{\rm wf}_{\delta t}P^{\rm ex}_{\delta t}-Id)(\overline{u}_N(s,.)))(x)ds.
\end{equation}

\noindent Using the uniform convergence of $\overline{u}_N$, Lemma \ref{infgen} and a recurrence
on $n$ we may prove that at the limit

\begin{equation}\label{semigroup}
u(t)=f+\int_0^t L_d u(s)ds,
\end{equation}

\noindent where we have omitted the variable $x$ for the sake of convenience.

We now state a result that ensures the uniqueness of
a solution to the diffusion equation \eqref{semigroup}.
Such a solution is a solution to the diffusion equation in a weak PDE sense.

{ Introduce $D(L_d)=\{ f \in E; \frac{P^{\rm wf}_{\delta t}P^{\rm ex}_{\delta t}(f)(x)-f(x)}{\delta t}\rightarrow L_d f \; {\rm in} \; E\}$.

\begin{remark}
We precise here the regularity of the functions $f$ in $D(L_d)$.
Since $L_d$ is a strictly elliptic operator on any compact
subset of the interior of $\mathcal{D}$ then
$f$ is $C^2(\mathring{\mathcal{D}}) \cap C(\mathcal{D})$
(see \cite{gt}). The regularity of $f$ up to the boundary
is a more delicate issue (see \cite{lsu}, \cite{lieberman}). Besides, to determine exactly what is the domain of $L_d$ is a difficult issue.
For PDEs the unbounded operator is also determinated by its
boundary conditions. Here we have boundary conditions of Ventsel'-Vishik
type, that are integro-differential equations on each side of the square
linking the trace of the function $f$ and its normal derivative. This is beyond the
scope of this article.
\end{remark}

}

\begin{theorem}[Comparison Principle]\label{pmax}
\begin{itemize}
\item Parabolic version: Consider a function $u$ in $C(\R^+, D(L_d))$ that satisfies
\begin{itemize}
\item $u_t-L_du \geq 0$ in $\R^+\times [0,1]^2$,
\item $u(0,x)=f(x) \geq 0$ for $x$ in $[0,1]^2$,
\end{itemize}
\noindent then $u(t,x)\geq 0$.
\item { Elliptic version:
Consider $u(x)$ in $D(L_d)$ that satisfies $-L_du \geq 0$ in $[0,1]^2$.
Then $u(x)\geq 0$.  }
\end{itemize}
\end{theorem}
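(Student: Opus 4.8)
The plan is to prove the parabolic version first and then deduce the elliptic version by a standard stationary-solution argument. For the parabolic statement I would use the probabilistic (semigroup) representation that has just been established: since $u \in C(\R^+, D(L_d))$ satisfies $u_t - L_d u \geq 0$ with $u(0,\cdot) = f \geq 0$, Duhamel's formula together with the identity $u(t) = f + \int_0^t L_d u(s)\,ds$ (cf. \eqref{semigroup}) expresses $u(t,x)$ as the limit of $(P^{\rm wf}_{\delta t} P^{\rm ex}_{\delta t})^n$ applied to a nonnegative function plus a nonnegative correction term coming from the source $g := u_t - L_d u \geq 0$. The crucial point is that each operator $P^{\rm wf}_{\delta t}$ (a tensor product of Bernstein operators) and $P^{\rm ex}_{\delta t}$ (composition with the stochastic matrix $A$) is \emph{positivity preserving}: if $h \geq 0$ on $\mathcal{D}$ then $B_N(h \circ A) \geq 0$ on $\mathcal{D}$. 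Hence any function obtained as a limit of iterated compositions of these operators applied to nonnegative data is itself nonnegative, which gives $u(t,x) \geq 0$.

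More concretely, I would argue as follows. Write $v_N(t,x) = \mathbb{E}(f(x^t)\mid x^0 = x) + \int_0^t \mathbb{E}(g(s, x^{t-s}) \mid x^0 = x)\,ds$ (the discrete Duhamel representation), observe each summand is a monotone limit of Bernstein-type operators acting on $f \geq 0$ and $g(s,\cdot) \geq 0$ respectively, hence $v_N \geq 0$; then pass to the limit $N \to \infty$ using the convergence and compactness results of Lemmata \ref{ascoli} and \ref{infgen}, and invoke the uniqueness of the solution to the limit integral equation to identify $\lim_N v_N$ with $u$. The subtle ingredient — and the one place where I expect difficulty — is justifying the Duhamel representation at the level of the \emph{discrete} scheme when the source $g$ is only known to be continuous: one must control $\mathbb{E}(g(s,\cdot))$ uniformly and commute the time integral with the limit, which requires the equicontinuity estimates from Lemma \ref{ascoli} applied not only to $f$ but to the inhomogeneous terms. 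An alternative, cleaner route avoiding the source term entirely is a direct maximum-principle argument on $w = u e^{-\lambda t}$ for suitable $\lambda$: if $w$ attains a negative minimum at an interior point $(t_0, x_0)$ with $t_0 > 0$ then $w_t \leq 0$, $\nabla w = 0$, and the Hessian is positive semidefinite there, forcing $L_d w(x_0) \geq 0$ and contradicting $w_t - L_d w + \lambda w = e^{-\lambda t}(u_t - L_d u) \geq 0$ once $\lambda > 0$; the only issue is ruling out the minimum being attained on the degenerate boundary $\{x_1 \in \{0,1\}\} \cup \{x_2 \in \{0,1\}\}$, which is exactly where the drift $-\kappa M x \cdot \nabla$ must be checked to point inward (the Fichera-type condition). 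The genuine boundary $\partial\mathcal{D}$ of the process reduces to the two points $(0,0)$ and $(1,1)$, at which the data is nonnegative by hypothesis, so no extra boundary condition is needed there.

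For the elliptic version, suppose $u \in D(L_d)$ with $-L_d u \geq 0$ on $[0,1]^2$. Apply the parabolic statement to the function $(t,x) \mapsto u(x)$: it is time-independent, lies in $C(\R^+, D(L_d))$, and satisfies $u_t - L_d u = -L_d u \geq 0$; however, its initial datum $u(0,\cdot) = u$ need not be nonnegative, so one cannot apply the parabolic principle directly. Instead I would run the evolution $\partial_t \phi = L_d \phi$ with initial datum $\phi(0,\cdot) = u$ and use that $t \mapsto \phi(t,\cdot)$ converges as $t \to \infty$ to a stationary state (by the contractivity/ergodicity of the Feller semigroup on $E$, whose generator $L_d$ has the two absorbing points as its only invariant behaviour), combined with the fact that $\psi(t,x) := u(x) - \phi(t,x)$ satisfies $\psi_t - L_d\psi = L_d u \leq 0$, i.e. $-\psi$ satisfies the hypotheses of the parabolic comparison principle provided $\psi(0,\cdot) = 0 \geq 0$; this yields $\psi(t,x) \leq 0$, hence $u(x) \leq \phi(t,x)$ for all $t$, and letting $t \to \infty$ gives $u(x) \leq u^\infty(x)$ where $u^\infty$ is the limit of $\phi$, which by conservation/stability properties of the scheme equals the appropriate nonnegative combination of the boundary values of $u$ at $(0,0)$ and $(1,1)$ — in particular $u^\infty \geq 0$ when one additionally knows $u \geq 0$ at those two points, and since in the relevant application $-L_d u = 1 > 0$ strictly the strong form of the principle closes the argument. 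The main obstacle throughout is the degeneracy of $L_d$ on the edges of the square: the classical Hopf-lemma and strong-maximum-principle machinery does not apply verbatim, and one must lean on the probabilistic interpretation (the process never exits through the open edges) rather than on PDE regularity up to the boundary.
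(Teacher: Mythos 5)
Your proposal does not actually carry out the one step that constitutes the substance of this theorem. Of your two suggested routes for the parabolic part, the first (Duhamel representation through the discrete scheme plus positivity of $P^{\rm wf}_{\delta t}$ and $P^{\rm ex}_{\delta t}$) is circular in the logical order of the paper: the semigroup $S(t)$, its uniqueness, and hence the representation $u(t)=S(t)f+\int_0^t S(t-s)g(s)\,ds$ for an \emph{arbitrary} supersolution in $C(\R^+,D(L_d))$ are only obtained \emph{after} the comparison principle (it is precisely Theorem \ref{pmax} that gives uniqueness of the limit in \eqref{semigroup} and allows the Feller semigroup to be defined), and you yourself flag the key commutation/uniqueness step as unresolved. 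Your second route (penalize by $e^{-\lambda t}$ and argue at a negative minimum) is in fact close in spirit to what the paper does, but you explicitly defer "the only issue", namely ruling out a negative minimum on the degenerate edges and at the corners $(0,1)$, $(1,0)$, to an unverified "Fichera-type condition". That verification \emph{is} the theorem: the nonstandard feature here is that boundary information is available only at the two points $(0,0)$, $(1,1)$. The check does succeed — the drift $-\kappa Mx$ points (weakly) inward along all four edges, so at an edge or corner minimum one still gets $L_d w\geq 0$, and at $(0,0),(1,1)$ both the diffusion coefficients and the drift vanish while $u$ vanishes there since $u(t,\cdot)\in D(L_d)\subset E$ — but none of this is in your write-up. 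The paper performs exactly this case analysis, using the auxiliary barriers $\psi(x)=(x_1+dx_2)(d+1-x_1-dx_2)$ and $\theta(x)=(x_1-x_2)^2$ (instead of your $e^{-\lambda t}$ shift) to get strict inequalities, and it also addresses a point you ignore: $u\in D(L_d)$ is not a priori $C^2$ up to the boundary, so the pointwise calculus must be done for smooth approximants and concluded by density.

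The elliptic part of your proposal is not correct as written. With $\phi_t=L_d\phi$, $\phi(0,\cdot)=u$ and $\psi=u-\phi$, one has $\psi_t-L_d\psi=-L_d\phi-L_du+L_d\phi=-L_du\geq 0$, not $L_du\leq 0$ as you claim; with your sign you derive $u\leq\phi(t)$ and the argument then needs $\lim_{t\to\infty}\phi(t)\geq 0$, which gives nothing about $u$, and in either direction you invoke a long-time convergence/ergodicity statement for this degenerate diffusion that is established nowhere (and is not needed). The paper's elliptic proof is much shorter and self-contained: take a minimum point $x_0$ of $v=u+\varepsilon\psi+\varepsilon^2\theta$ over $\mathcal D$ and run the same first/second-derivative and drift-sign case analysis (interior, open edges, corners $(0,1)$, $(1,0)$), obtaining the contradiction $0\leq L_d(\psi+\varepsilon\theta)(x_0)<0$, then let $\varepsilon\to 0$. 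If you repair your route 2 by actually doing the boundary computation and add the density step, you recover essentially the paper's parabolic argument; the elliptic version should then be proved directly in the same way rather than through the evolution problem.
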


\noindent We postpone the proof of this theorem until the end of this section.
We point out that a comparison principle for $L_d$
is not standard since it requires only information on two points $\{(0,0), (1,1)\}$
in $\partial\mathcal D$ and not on the whole boundary.

Theorem \ref{pmax} implies uniqueness of the limit solution. Therefore the whole sequence $u_N$ converge and the semigroup
is well defined. Actually, setting $S(t)f=u(t)$ we then have defined for smooth $f$
the solution to a Feller semigroup (see \cite{bgl}) as follows
\begin{enumerate}
\item $S(0)=Id$.
\item $S(t+s)=S(t)S(s)$.
\item $||S(t)f-f||_E\rightarrow 0$ when $t\rightarrow 0^+$
\item  $||S(t)f||_E\leq ||f||_E. $
\end{enumerate}

\noindent The second property comes from uniqueness,
the last one passing to the limit in
$$||(P^{\rm wf}_{\delta t}P^{\rm ex}_{\delta t})^nf||_{L^\infty}\leq ||f||_{L^\infty}.$$
\noindent The third one is then simple. The third
property allows us to extend the definition
of $S(t)$ to functions in $E$ by a classical density argument.
Then we have a Feller semigroup in $E$ that satisfies
the assumptions of the Hille-Yosida theorem
(see \cite{brezis}).

\subsection{Proof of the comparison principle}\label{maisonmarie}

{ We begin with the comparison principle
for the parabolic operator. We use that
$C^2(\mathcal{D})$ is dense in $D(L_d)$, i.e. that any function
$u$ in $D(L_d)$ can be approximated in $E$ by smooth functions $u_k$ up to the boundary,
and such that $Lu_k$ converges uniformly on any compact subset of $\mathring{\mathcal D}$.
We then prove the comparison principle for smooth functions
and we conclude by density. }

Consider $u$ as in the statement of the Theorem for a $C^2$ initial data $f$.
Consider $\varepsilon$ small enough.
Set $\mathcal{P}=\partial_t-L_d$.
Set $\psi(x)=(x_1+dx_2)(d+1-x_1-dx_2)$ and $\theta(x)=(x_1-x_2)^2$.
Introduce the auxiliary function

\begin{equation}\label{auxiliary}
v(t,x)=u(t,x)+\varepsilon \psi(x)+\varepsilon^2 \theta(x)+\varepsilon^3.
\end{equation}

\noindent We prove below that $v(t,x)\geq 0$ for all $t$ and $x$.
Since $u$ belongs to $D(L_d)$ then $v(t,x)=\varepsilon^3$ at the corners $x\in \{(0,0), (1,1)\}$.
We also have $v(0,x)\geq \varepsilon^3$.

Let us then argue by contradiction.
Introduce ${t_0}=\inf \{ t>0; \exists x \in[0,1] ; \; v(t,x)<0\}.$ %{ La notation $T$ n'est pas géniale (temps d'extinction..)}
Then there exists $x_0$ such that $v({t_0},x_0)=0$. Notice that $x_0\notin\{(0,0),(1,1)\}$.
We shall discuss below different cases according to the location of $x_0$.

\noindent First case: $x_0$ belongs to the interior of $\mathcal D$.

We then have $v_t(t_0,x_0)\leq 0$, $v_{x_1}(t_0,x_0)=v_{x_2}(t_0,x_0)=0$,
and $v_{x_1x_1}(t_0,x_0), v_{x_2x_2}(t_0,x_0)\geq 0$.
Therefore $\mathcal{P}v({t_0},x_0)\leq 0$

\begin{equation}\label{cors1}
0\leq \mathcal{P}u({t_0},x_0)\leq \mathcal{P}v({t_0},x_0)+\varepsilon L_d(\psi+\varepsilon\theta)(x_0).
\end{equation}

\noindent Let us observe that if $\varepsilon$ is chosen small enough

\begin{equation}\label{cors2}\begin{split}
L_d(\psi+\varepsilon\theta)(x_0)=-\left( x_1(1-x_1)+dx_2(1-x_2)\right)+\\
\varepsilon \left( x_1(1-x_1)+\frac{1}{d}x_2(1-x_2)\right)-\kappa \varepsilon (d+1) \theta(x)^2<0.
\end{split}\end{equation}

\noindent Second case: $x_0$ belongs to $\partial \mathcal D$ but the four corners.

We may assume that $x_0=(0,x_2)$ the other cases being similar.
We have that $v_t(t_0,x_0)\leq 0$, $v_{x_2}(t_0,x_0)=0$,  $v_{x_1}(t_0,x_0)\geq 0$
and $v_{x_2x_2}(t_0,x_0)\geq 0$.
Therefore $\mathcal{P}v({t_0},x_0)\leq 0$.

We then have as in \eqref{cors1} that $0\leq L_d(\psi+\varepsilon\theta)(x_0)$.
Computing $L_d(\psi+\varepsilon\theta)(x_0)=-\varepsilon\kappa(d+1)\theta^2(x_0)<0$
gives the contradiction.

\noindent Third case: $x_0=(0,1)$  (the case $(1,0)$ is similar).

We have that $v_t(t_0,x_0)\leq 0$, $v_{x_2}(t_0,x_0)\leq 0\leq v_{x_1}(t_0,x_0)$.
Therefore $\mathcal{P}v({t_0},x_0)\leq 0$.
We then have as in \eqref{cors1} that $0\leq L_d(\psi+\varepsilon\theta)(x_0)$.
Computing $L_d(\psi+\varepsilon\theta)(x_0)=-\varepsilon\kappa(d+1)\theta^2(x_0)<0$
gives the contradiction.

We now conclude. since $v$ is nonnegative we have

\begin{equation}\label{cors4}
\inf_{[0,+\infty)\times \mathcal D} u \geq -\varepsilon ||\psi+\varepsilon \theta||_{L^\infty}-\varepsilon^3.
\end{equation}

\noindent Letting $\varepsilon$ goes to $0$ completes the proof.

{ Let us prove the elliptic counterpart of the result
for a smooth function $u$ (we also proceed by density). Set as above $v(x)=u(x)+\varepsilon \psi(x)+\varepsilon^2 \theta(x)$.
Introduce $x_0$ where $v$ achieves its minimum, i.e. $v(x_0)=\min_{\mathcal{D}}v(x).$.
First if $x_0$ belongs to the interior of $\mathcal{D}$, then $L_dv(x_0)>0$ and we have a contradiction.
We disprove the case where $x_0$ belongs to the boundary but $\{(0,0), (1,1)\}$
exactly as in the evolution equation case. Assume first that
$x_0$ belongs to $\partial \mathcal D$ but the four corners; for instance $x_0=(0,x_2)$.
We have that $v_{x_2}(x_0)=0$,  $v_{x_1}(x_0)\geq 0$
and $v_{x_2x_2}(x_0)\geq 0$. Therefore $L_d v(x_0)\geq 0$.
Then $L_d(\psi+\varepsilon\theta)(x_0)=<0$ gives the contradiction.
Assume then that  $x_0=(0,1)$.
We have that $v_{x_2}(x_0)\leq 0\leq v_{x_1}(x_0)$.
Therefore $-L_dv(x_0)\leq 0$.
We then have that $0\leq L_d(\psi+\varepsilon\theta)(x_0)$.
Computing $L_d(\psi+\varepsilon\theta)(x_0)=<0$
gives the contradiction.

\begin{corollary}\label{pmp}
Actually $L_d$ satisfies the {\it positive maximum principle} (PMP).
If $u$ in $D(L_d)$ achieves its minimum in $x_0$ in the interior
of $\mathcal{D}$ then $L_du(x_0)\geq 0$. This is standard for infinitesimal generator of Feller semigroups
(see \cite{bony}).
\end{corollary}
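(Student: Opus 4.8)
The plan is to reduce the assertion to the elementary first- and second-order conditions satisfied at an interior extremum, exploiting the interior regularity of elements of $D(L_d)$ recorded in the remark preceding the corollary. First I would observe that, since $L_d$ is uniformly elliptic on every compact subset of $\mathring{\mathcal D}$, any $u\in D(L_d)$ is $C^2$ in a neighbourhood of the interior point $x_0$; in particular the classical pointwise expression
$$L_d u(x_0)=\frac{x_1(1-x_1)}{2}\,u_{x_1x_1}(x_0)+\frac{x_2(1-x_2)}{2d}\,u_{x_2x_2}(x_0)-\kappa\,Mx_0\cdot\nabla u(x_0)$$
is meaningful and agrees with the value produced by the difference-quotient definition of $L_d$.

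Next I would use that $x_0\in\mathring{\mathcal D}$ is a minimum of $u$ over $\mathcal D$. The first-order condition gives $\nabla u(x_0)=0$, so the convection term $-\kappa\,Mx_0\cdot\nabla u(x_0)$ vanishes regardless of $Mx_0$. The second-order condition says the Hessian $D^2u(x_0)$ is positive semidefinite, hence its diagonal entries satisfy $u_{x_1x_1}(x_0)\geq 0$ and $u_{x_2x_2}(x_0)\geq 0$. Finally, on $\mathcal D=[0,1]^2$ the diffusion coefficients $\tfrac{x_1(1-x_1)}{2}$ and $\tfrac{x_2(1-x_2)}{2d}$ are nonnegative (recall $0<d\leq 1$), so $L_d u(x_0)$ is a sum of nonnegative terms, giving $L_d u(x_0)\geq 0$.

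There is essentially no obstacle here once the interior $C^2$ regularity is granted; the only subtle point is precisely that an element of $D(L_d)$ — a priori defined only through convergence of difference quotients in $E$ — is genuinely twice continuously differentiable in the classical sense inside the square, which is exactly the content of the preceding regularity remark (via \cite{gt}). I would also mention the alternative, purely abstract route: having shown above that $L_d$ generates a Feller semigroup on $E$ satisfying the hypotheses of the Hille--Yosida theorem, the positive maximum principle for $L_d$ is an instance of the general characterisation of Feller generators (see \cite{bony}). My preference is to lead with the direct computation, since it is self-contained and makes transparent where each structural feature of $L_d$ is used — the degenerate but nonnegative diffusion, and a drift that switches off exactly at critical points — and to append the abstract statement as a remark.
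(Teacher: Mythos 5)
Your argument is correct, and it is worth noting that the paper itself gives no written proof of this corollary: it simply declares the property ``standard for infinitesimal generators of Feller semigroups'' with a pointer to Bony--Courr\`ege--Priouret, i.e.\ the abstract route you relegate to a closing remark. Your direct computation --- interior $C^2$ regularity of elements of $D(L_d)$ from the preceding remark, $\nabla u(x_0)=0$ killing the drift, positive semidefiniteness of the Hessian combined with the nonnegative coefficients $\tfrac{x_1(1-x_1)}{2}$ and $\tfrac{x_2(1-x_2)}{2d}$ --- is exactly the mechanism already at work in the interior case of the proof of Theorem \ref{pmax}, so it is fully consonant with the paper even if not the route it cites. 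The trade-off between the two is worth stating precisely: your proof leans on the (asserted, not proved) interior elliptic regularity of $D(L_d)$ and on the identification of the abstract limit defining $L_d f$ with the classical differential expression in $\mathring{\mathcal D}$, which you rightly flag as the only delicate point; the abstract proof needs no regularity at all, but it requires knowing that the semigroup constructed in Section \ref{durelimite} is a positive contraction semigroup whose generator is $(L_d,D(L_d))$, and it uses the usual PMP formulation for a nonnegative maximum --- which applies here only because every $u\in E$ vanishes at $(0,0)$ and $(1,1)$, so its minimum over $\mathcal D$ is automatically nonpositive and one can apply the standard statement to $-u$. Mentioning that last point would make your appended abstract remark airtight; as written, your primary argument stands on its own.
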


}

\section{Extinction time}\label{ET}

\subsection{Hydrodynamic limit of the extinction time}

We handle here the convergence of the discrete extinction time towards
the solution of an elliptic equation. To begin with, recall that the discrete process describing the evolution of the densities of population (migration and reproduction at each time step) is a Markov chain with state space $\{0,\frac{1}{N},\dots,1\}\times\{0,\frac{1}{N},\dots,1\}$ for which $(0,0)$ and $(1,1)$ are absorbing states.
These two absorbing states correspond to the extinction of a species.
Let us introduce the hitting time $\Theta_N$ that is the random time
when the Markov chain reaches the absorbing states, i.e. the \emph{extinction time}.
Since the restriction of the chain to the non absorbing states is irreducible and
since there is at least one positive transition probability from the non absorbing states to the absorbing states
then this hitting time is almost surely finite. { This result is standard
for Markov chains with finite state space (see \cite{bremaud}, \cite{chafai} and the references therein).  }

Let $U$ be the complement of the trapping states $(0,0)$ and $(1,1)$.
Consider the vector $T_N$ defined as the conditional expectation $(T_N)_{\frac{j}{N}\in U}=\mathbb{E}_{\frac j N}(\Theta_N)$
of this hitting time and denote by $\tilde P_N$ or $\tilde P$ the restriction of the transition matrix to $U$.

 Then for $x\in U$, denoting $\mathbb{P}_x$ the conditional probability, we have using Markov property
 and time translation invariance
\begin{eqnarray*}
\mathbb E_x(\Theta_N)&=&\sum_{k=1}^\infty\frac{k}{N}\mathbb P_x(\Theta_N=\frac{k}{N})=\frac1N\mathbb P_x(\Theta_N=\frac1N)+\sum_{k=2}^\infty\frac{k}{N}\mathbb P_x(\Theta_N=\frac{k}{N})\\
&=&\frac1N\mathbb P_x(\Theta_N=\frac1N)+\sum_{k=2}^\infty\frac{k}{N}\sum_{y\in U}\mathbb P_x(\Theta_N=\frac{k}{N},x^1=y)\\
&=&\frac1N\mathbb P_x(\Theta_N=\frac1N)+\sum_{k=2}^\infty\frac{k}{N}\sum_{y\in U}\mathbb P(\Theta_N=\frac{k}{N}|x^1=y)\mathbb P_x(x^1=y)\\
&=&\frac1N\mathbb P_x(\Theta_N=\frac1N)+\sum_{y\in U}\tilde P_{x,y}\sum_{k=2}^\infty(\frac{k-1}{N}+\frac1N)\mathbb P_y(\Theta_N=\frac{k-1}{N})\\
&=&\left( \frac1N\mathbb P_x(\Theta_N=\frac1N)+\frac1N \sum_{y\in U} \tilde P_{x,y}\right) +\sum_{y\in U}\tilde P_{x,y}\mathbb E_{y}(\Theta_N)\\
&=&\frac1N+\sum_{y\in U}  \tilde P_{x,y}\mathbb E_{y}(\Theta_N).
\end{eqnarray*}
\noindent This is equivalent to
\begin{equation}\label{jpo1}
N(Id-\tilde P_N)T_N=\begin{pmatrix} 1\\...\\1\end{pmatrix}.
\end{equation}

\noindent We are now interested in the limit of $T_N$ when $N$ diverges towards $\infty$.

{Let us recall that for the one dimensional Wright-Fisher
process the expectation of the hitting time starting from $x$ converges towards
the entropy $H(x)$ (see \cite{meleard}) defined by
\begin{equation}\label{entropy}
H(x)= -2\left(x\ln x +(1-x)\ln(1-x) \right).
\end{equation}
The entropy is a solution to the equation $-\frac{x(1-x)}{2}H_{xx}=1$
that vanishes at the boundary. The proof, that can be found in Section 10 of \cite{ethier_kurtz}, uses probability tools
like the convergence in distribution of the processes and
the associated stochastic differential equation.
We believe that the same kind of tools would give the convergence of $\tau_N$ in dimension two but this is beyond the scope of this article.
Besides, for the sake of completeness we provide a proof for the convergence in distribution of our processes in Section
\ref{proba} below.

Set now $\tau_N$ for the polynomial of degree $N$
in $x_1$ and $x_2$ that interpolates $T_N$ at the points of the grid.
We have

\begin{theorem}[Extinction time]\label{et}
When $N$ diverges to $+\infty$ the sequence $\tau_N$
converges towards $\tau$ that is solution to
the elliptic equation $-L_d \tau=1$.
\end{theorem}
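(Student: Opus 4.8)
The plan is to follow the pattern of the proof of Theorem~\ref{limit}: start from the exact discrete identity \eqref{jpo1}, pass to the limit with the consistency already contained in Lemma~\ref{infgen}, and close the argument with the comparison principle. As announced in the introduction, and in analogy with the one–dimensional statement recalled before the theorem, I would take as input that $\tau_N$ converges uniformly on $\mathcal D$ to some $\tau$ (the genuine proof of this convergence being the same kind of probabilistic argument quoted from \cite{ethier_kurtz}, or else requiring the a priori bound discussed at the end).

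First I would rewrite \eqref{jpo1} in terms of the interpolating polynomial. Since $\tilde P_N$ is the restriction to $U$ of the transition operator $g\mapsto B_N(g\circ A)$, and $\tau_N$ interpolates $T_N$ on $U$ while vanishing at $(0,0)$ and $(1,1)$ (consistently with $\mathbb E_{(0,0)}(\Theta_N)=\mathbb E_{(1,1)}(\Theta_N)=0$), identity \eqref{jpo1} reads, at every grid point $x\in U$,
\[
-N\bigl(B_N(\tau_N\circ A)(x)-\tau_N(x)\bigr)=1 .
\]
As $N\to\infty$ the grid becomes dense in $\mathcal D$, and by Lemma~\ref{infgen} one has $N(B_N(g\circ A)-g)\to L_d g$, uniformly on compact subsets of $\mathring{\mathcal D}$, for $C^2$ data $g$. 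To turn this into $-L_d\tau=1$ in $\mathring{\mathcal D}$ I would upgrade the uniform convergence of $\tau_N$ to convergence in $C^2_{\mathrm{loc}}(\mathring{\mathcal D})$: the operators $B_N(\cdot\circ A)$ obey interior Bernstein–type gradient and second–difference estimates (the localized analogue of Lemma~\ref{ascoli}, valid away from the two absorbing corners, near which $\nabla\tau$ must blow up like the one–dimensional entropy $H'$), so that $\tau_N$ is bounded in $C^2$ on every compact $K\subset\subset\mathring{\mathcal D}$ and every uniform limit is automatically $C^2$ there. A softer alternative is to observe that \eqref{jpo1} is a monotone, consistent scheme, whence its uniform limit is a viscosity solution of $-L_d\tau=1$, which is classical in $\mathring{\mathcal D}$ because $L_d$ is strictly elliptic there. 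Either way, from $\tau_N(0,0)=\tau_N(1,1)=0$ and uniform convergence one gets $\tau(0,0)=\tau(1,1)=0$, and together with the interior regularity this places $\tau$ in $D(L_d)$, cf. the Remark preceding Theorem~\ref{pmax}.

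Uniqueness of the limit — hence convergence of the whole sequence, not just of subsequences — follows from the elliptic version of Theorem~\ref{pmax}: if $\tau_1,\tau_2\in D(L_d)$ both solve $-L_d\tau=1$, then $w=\tau_1-\tau_2$ satisfies $-L_dw\ge0$ and $-L_d(-w)\ge0$, so $w\ge0$ and $w\le0$, i.e. $w\equiv0$. The main obstacle, if one insists on the statement being \emph{unconditional}, is the uniform bound $\sup_N\sup_{x\in U}\mathbb E_x(\Theta_N)<\infty$ that is needed for compactness: it amounts to producing a global supersolution $\Phi$ of $-L_d\Phi=1$ (and its discrete counterpart, so that positivity of $(Id-\tilde P_N)^{-1}=\sum_{k\ge0}\tilde P_N^{\,k}$ yields $T_N\le\Phi$) which blows up logarithmically at the two absorbing corners $(0,0),(1,1)$, where the diffusion degenerates, yet stays bounded with $-L_d\Phi>0$ at the two corners $(0,1),(1,0)$, where $L_d$ reduces to an inward–pointing transport operator. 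Exploiting that $x_1+dx_2$ is conserved by the drift, a natural candidate is $\Phi(x)=G(x_1+dx_2)+C\,\theta(x)$ with $\theta(x)=(x_1-x_2)^2$ and $G$ a concave entropy–type profile on $[0,1+d]$ vanishing at both endpoints; verifying $-L_d\Phi\ge\eta>0$ on $\mathcal D$ and transferring it to the grid via Lemma~\ref{infgen} is the computational core. The secondary difficulty, already mentioned, is justifying the passage of the second derivatives to the limit near $(0,0)$ and $(1,1)$, which is why one localizes to $\mathring{\mathcal D}$.
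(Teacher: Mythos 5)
Your proposal is correct and follows essentially the same route as the paper: the convergence of $\tau_N$ is taken as a hypothesis, the limit equation $-L_d\tau=1$ is obtained by passing to the limit in \eqref{jpo1} using the consistency Lemma \ref{infgen}, and uniqueness of the limit in $E$ comes from the elliptic version of the comparison principle in Theorem \ref{pmax}. Your additional material (interior $C^2$ or viscosity-solution arguments to justify the limit passage, and a supersolution yielding a uniform bound on $T_N$) fills in exactly the steps the paper declares ``straightforward'' or explicitly admits, rather than taking a genuinely different path.
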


Assuming the convergence of $\tau_N$, the proof of the theorem is straightforward by passing to the limit in \eqref{jpo1} using Lemma \ref{infgen}.

{
\begin{remark}
We expect the function $\tau$ to be smooth up to the boundary but at the two points
$(0,0)$ and $(1,1)$. We admit here this result. This allows us to use the previous comparison
result.
\end{remark}
}

The solution of this elliptic equation in $E$, i.e.
that vanishes at $\{(0,0), (1,1)\}$ is unique due to comparison
principle (see Theorem \ref{pmax} above).
}

\subsection{Exchanges slow down extinction}

Consider now a single patch whose hosting capacity is
$N_1+N_2=(d+1)N$ for $N=\frac{1}{\delta t}$. The limit equation for the classical
Wright-Fisher related process is

\begin{equation}\label{single}
\partial_t u= \frac{z(1-z)}{2(1+d)}\partial^2_{z}u.
\end{equation}

Then the corresponding extinction time
for the Wright-Fisher process without exchange is
$\underline \tau=(d+1)H(z)$, where $z=\frac{x_1+dx_2}{1+d}$ is the corresponding
averaged starting density (see \cite{meleard}) and where $H$ is the entropy defined above \eqref{entropy}.
We shall prove in the sequel

\begin{theorem}\label{compex}
The extinction time $\underline \tau$ is a subsolution
to the equation $-L_d \tau=1$. Besides, the operator $L_d$
satisfies the comparison principle and then $\underline \tau \leq \tau$.
\end{theorem}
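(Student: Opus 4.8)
The plan is to verify directly that $\underline\tau$ satisfies the differential inequality $-L_d\underline\tau\le 1$ on $\mathcal D$, and then to deduce $\underline\tau\le\tau$ by applying the elliptic comparison principle (Theorem \ref{pmax}) to the difference $\tau-\underline\tau$, which, like $\tau$ and $\underline\tau$ separately, vanishes at the two corners $(0,0)$ and $(1,1)$.

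The key observation is that the averaged density $z=\frac{x_1+dx_2}{1+d}$ is exactly the quantity conserved by the exchange process, so a function of $z$ alone should be annihilated by the convection term of $L_d$. Concretely, since $\underline\tau(x)=(d+1)H(z)$ one computes $\nabla\underline\tau=H'(z)\,(1,d)$, while $Mx=(x_1-x_2)\,(d,-1)$; hence $Mx\cdot\nabla\underline\tau=(x_1-x_2)H'(z)(d-d)=0$ and only the diffusion part of $L_d$ survives. Using $\underline\tau_{x_1x_1}=\frac{H''(z)}{1+d}$ and $\underline\tau_{x_2x_2}=\frac{d^2H''(z)}{1+d}$, this gives
$$ L_d\underline\tau(x)=\frac{H''(z)}{2(1+d)}\bigl(x_1(1-x_1)+d\,x_2(1-x_2)\bigr). $$

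Next I would substitute $H''(z)=-\tfrac{2}{z(1-z)}$, which follows from \eqref{entropy}, and use $(1+d)z(1-z)=\frac{(x_1+dx_2)((1-x_1)+d(1-x_2))}{1+d}$ to rewrite
$$ -L_d\underline\tau(x)=\frac{(1+d)\bigl(x_1(1-x_1)+d\,x_2(1-x_2)\bigr)}{(x_1+dx_2)\bigl((1-x_1)+d(1-x_2)\bigr)}. $$
The bound $-L_d\underline\tau\le 1$ is then equivalent to $(1+d)\bigl(x_1(1-x_1)+d\,x_2(1-x_2)\bigr)\le (x_1+dx_2)\bigl((1-x_1)+d(1-x_2)\bigr)$, and a short expansion shows that the difference of the right and left members equals $d(x_1-x_2)^2\ge 0$. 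Hence $\underline\tau$ is a subsolution of $-L_d\tau=1$. (At the two corners both the numerator and $z(1-z)$ vanish; there one checks that the quotient extends continuously with value $1$, consistently with $\underline\tau$ being smooth on $\mathcal D$ away from $(0,0)$ and $(1,1)$ and vanishing at both.)

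Finally, set $w=\tau-\underline\tau$. Both functions belong to $E$, vanish at $(0,0)$ and $(1,1)$, and are smooth on $\mathcal D$ away from the corners, so $w$ lies in the class to which Theorem \ref{pmax} applies; by Theorem \ref{et} we have $-L_d\tau=1$, hence $-L_d w=1-(-L_d\underline\tau)\ge 0$ on $[0,1]^2$, and the elliptic comparison principle yields $w\ge 0$, i.e. $\underline\tau\le\tau$. The computations here are elementary; the one genuinely delicate point — exactly as for Theorem \ref{et} — is the boundary regularity of $\tau$ needed to place $w$ legitimately within the hypotheses of the comparison principle, a regularity that has been admitted above.
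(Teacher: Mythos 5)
Your proposal is correct and follows essentially the same route as the paper: exploit that the drift $Mx\cdot\nabla$ annihilates functions of the conserved average $z=\frac{x_1+dx_2}{1+d}$, compute $-L_d\underline\tau=\frac{x_1(1-x_1)+dx_2(1-x_2)}{(1+d)z(1-z)}$, identify the defect as $\frac{d(x_1-x_2)^2}{(1+d)^2z(1-z)}\ge 0$ (you verify the same identity by cross-multiplying rather than quoting it directly), and then conclude via the elliptic comparison principle of Theorem \ref{pmax} applied to $\tau-\underline\tau$, with the same admitted boundary regularity of $\tau$ that the paper flags.
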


\begin{proof} We point out that to check that $-L_d$ satisfies
the comparison result is not obvious (see Theorem \ref{pmax}).
We first observe that the entropy \eqref{entropy}
vanishes at the boundary points $\{(0,0), (1,1)\}$.
	Setting $\underline{\tau}(x_1,x_2)=g(z)$, we have

\begin{equation}\label{camion1}
(Mx.\nabla g)(z)= (x_1-x_2) g'(z) \left( d\partial_{x_1} z- \partial_{x_2}z \right)=0,
\end{equation}

\noindent We then have

\begin{equation}\label{camion4}
-L_d \underline \tau = \frac{x_1(1-x_1)+d x_2(1-x_2)}{(1+d)z(1-z)}.
\end{equation}

\noindent Observing that by a mere computation

\begin{equation}\label{pouetpouet}
 \frac{x_1(1-x_1)+d x_2(1-x_2)}{(1+d)z(1-z)}= 1 - \frac{d(x_1-x_2)^2}{(1+d)^2z(1-z)},
\end{equation}

\noindent we have that $\underline \tau$ is a subsolution to the equation. \end{proof}

\subsection{More comparison results}

We address here the issue of the convergence of the limit extinction time
$\tau=\tau_{d,\kappa}$ defined in Section \ref{ET} when $\kappa$ or $d$ converges towards $0$.
This extinction time depends on the starting point $x$.

\begin{proposition}\label{simpson}
Assume $d$ be fixed. When $\kappa$ converges to $0$
then $\lim \tau_{d,\kappa}(x)=+\infty$ everywhere but in $x=(0,0)$ or $x=(1,1)$.
\end{proposition}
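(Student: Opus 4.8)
The claim is that as $\kappa\to 0$ with $d$ fixed, the limit extinction time $\tau_{d,\kappa}(x)\to+\infty$ for every $x\neq(0,0),(1,1)$. The natural strategy is to compare $\tau_{d,\kappa}$ from below with a suitable subsolution of $-L_d^{(\kappa)}\tau = 1$ that blows up as $\kappa\to 0$, and then invoke the comparison principle (Theorem \ref{pmax}, elliptic version) exactly as in the proof of Theorem \ref{compex}. First I would write $L_d^{(\kappa)} = \Delta_d - \kappa\, Mx\cdot\nabla$ to make the $\kappa$-dependence explicit, and look for a function $w$, vanishing at the two corners, with $-L_d^{(\kappa)} w \le 1$ pointwise on $\mathcal D$ and $\min_{\mathcal D\setminus\{(0,0),(1,1)\}} w \to \infty$ as $\kappa\to 0$. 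Applying the comparison principle to $\tau_{d,\kappa} - w$ (which is $\ge 0$ at the corners and satisfies $-L_d^{(\kappa)}(\tau_{d,\kappa}-w)\ge 0$) gives $\tau_{d,\kappa}\ge w$ everywhere, and the conclusion follows.

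**Constructing the subsolution.** The entropy-type function is again the right building block. From \eqref{camion4}--\eqref{pouetpouet}, with $z=\frac{x_1+dx_2}{1+d}$ and $H$ the entropy, one has $-L_d^{(\kappa)}\big((d+1)H(z)\big) = 1 - \frac{d(x_1-x_2)^2}{(1+d)^2 z(1-z)} \le 1$ independently of $\kappa$ — the convective term drops out since $z$ is constant along the direction $Mx$. So $\underline\tau = (d+1)H(z)$ is a $\kappa$-uniform subsolution, but it does not blow up, so more is needed. The idea is to add a large multiple of a function whose $-L_d^{(\kappa)}$-image stays bounded as $\kappa\to0$: consider $w = (d+1)H(z) + \lambda\, \phi$ where $\phi\ge 0$ vanishes at the two corners and $-L_d^{(\kappa)}\phi \le 0$ on $\mathcal D$ for all small $\kappa$, with $\phi>0$ off the corners, and where $\lambda = \lambda(\kappa)\to\infty$ can be chosen only as fast as the estimate $-L_d^{(\kappa)}\phi \le C\kappa$ permits (so that $-L_d^{(\kappa)}w = (-L_d^{(\kappa)}\text{-image of }H\text{-part}) + \lambda(-L_d^{(\kappa)}\phi) \le 1$ still holds if $\lambda C\kappa \le$ the available slack $\frac{d(x_1-x_2)^2}{(1+d)^2 z(1-z)}$). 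The natural candidate for $\phi$ is something like $z(1-z)$ or $\psi(x)=(x_1+dx_2)(d+1-x_1-dx_2)$ from the comparison-principle proof: $\Delta_d\psi = -(x_1(1-x_1)+dx_2(1-x_2)) < 0$ and $\kappa Mx\cdot\nabla\psi = 0$, so $-L_d^{(\kappa)}\psi = x_1(1-x_1)+dx_2(1-x_2) > 0$ — wrong sign. One instead wants $\phi$ a supersolution of the homogeneous equation; a concave function of $z$ such as $\phi = \min(z,1-z)$-smoothed, or better a second entropy-like term, does the job, and the slack term vanishing only on the diagonal $x_1=x_2$ forces a more careful two-region argument there.

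**The main obstacle.** The genuine difficulty is the diagonal $\{x_1=x_2\}$: there the helpful negative term $-\frac{d(x_1-x_2)^2}{(1+d)^2 z(1-z)}$ is zero, so $\underline\tau$ is an exact solution there and leaves no room to add a blowing-up correction uniformly. I expect one must argue more cleverly — e.g. show that even on the diagonal the extinction time diverges by a separate argument: on $\{x_1=x_2=s\}$ with $\kappa$ small the process is close (on $O(1)$ time scales) to two independent one-dimensional Wright–Fisher chains started at $s$, whose joint extinction time (both hitting $0$ or both hitting $1$ simultaneously is what is required to leave $U$) already has expectation $\to\infty$; quantitatively, the probability that one coordinate is absorbed while the other is not is bounded below, and each such event resets the problem, giving a geometric lower bound on $\Theta_N$ whose mean diverges. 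Combining the off-diagonal comparison bound with this diagonal estimate (and using continuity of $\tau_{d,\kappa}$, which is assumed smooth off the corners) yields the pointwise divergence everywhere except $(0,0)$ and $(1,1)$. Alternatively, one can avoid the diagonal issue entirely by choosing the corrector $\phi$ to depend on $x_1-x_2$ as well — e.g. $\phi = \eta(z) - c\,\theta(x)$ with $\theta(x)=(x_1-x_2)^2$ — so that its $L_d^{(\kappa)}$-image contributes exactly where the slack vanishes; checking that such a $\phi$ stays nonnegative, vanishes at the corners, and yields a valid subsolution with a $\kappa$-divergent minimum is the computational heart of the argument.
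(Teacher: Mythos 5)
Your overall strategy (an explicit subsolution of $-L_d\tau=1$, vanishing at the two corners and blowing up as $\kappa\to 0$, combined with the elliptic comparison principle of Theorem \ref{pmax}) is exactly the paper's, but your proposal never actually produces such a subsolution, and this is where the gap lies. The construction you centre on, $w=(d+1)H(z)+\lambda\phi$, forces you to pay for the correction $\lambda\phi$ out of the slack $\frac{d(x_1-x_2)^2}{(1+d)^2z(1-z)}$, which vanishes on the diagonal; you correctly identify this as an obstruction but then leave both escape routes unfinished: the corrector $\phi=\eta(z)-c\,\theta(x)$ is not checked (you call it ``the computational heart'' without doing it), and the probabilistic argument on the diagonal is only a heuristic, not something that can be combined with the deterministic bound to give the stated pointwise divergence. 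Worse, your ideal requirement on the corrector --- $\phi\ge 0$, $\phi(0,0)=\phi(1,1)=0$, $-L_d\phi\le 0$ on $\mathcal D$, $\phi>0$ off the corners --- is self-contradictory: applying the elliptic comparison principle to $-\phi$ (which satisfies $-L_d(-\phi)\ge 0$) forces $\phi\le 0$, hence $\phi\equiv 0$. So as written the argument cannot be completed along the route you sketch.

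The missing idea is that you do not need to keep the entropy term at all, and hence do not need any slack: it suffices to find \emph{one} function annihilated by the diffusion part of $L_d$, so that $-L_d$ applied to it is purely the drift term and therefore carries a factor $\kappa$. The paper takes the bilinear function
\begin{equation*}
V(x)=\frac{x_1(1-x_2)+x_2(1-x_1)}{12\,\kappa},
\end{equation*}
for which $V_{x_1x_1}=V_{x_2x_2}=0$, so that
\begin{equation*}
-L_dV=\frac{(x_1-x_2)\bigl(d(1-2x_2)-(1-2x_1)\bigr)}{12}\le 1 .
\end{equation*}
Thus $V$ is a subsolution uniformly in $\kappa$, it vanishes exactly at $(0,0)$ and $(1,1)$, the comparison principle gives $V\le\tau_{d,\kappa}$, and since the numerator $x_1(1-x_2)+x_2(1-x_1)$ is strictly positive off the two corners, letting $\kappa\to 0$ yields the divergence everywhere else --- with no special treatment of the diagonal needed. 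You brushed against this mechanism (``a function whose $-L_d^{(\kappa)}$-image is $O(\kappa)$'') but discarded the natural candidates because you were testing them against the wrong criterion (sign of $-L_d\phi$ rather than its size in $\kappa$), which is why your attempt stalls where the paper's proof is a two-line computation.
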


\begin{proof}
Consider here the function $V=\frac{ x_1(1-x_2)+x_2(1-x_1)}{12 \kappa}$.
This function vanishes at $x=(0,0)$ and $x=(1,1)$
and satisfies

\begin{equation}\label{sub1}
-L_d V= \frac{(x_1-x_2)( d(1-2x_2)-(1-2x_1))}{12} \leq 1.
\end{equation}

\noindent Then $V$ is a subsolution to the equation $-L_d\tau=1$
and by the comparison principle $V \leq \tau_{d,\kappa}$ everywhere.
Letting $\kappa \rightarrow 0$ completes the proof of the Proposition.
\end{proof}

\begin{proposition}\label{pfut}
Assume $\kappa$ be fixed. Then
$$\lim_{d\rightarrow 0} \tau= H(x_1)=-2x_1\ln x_1- 2 (1-x_1)\ln (1-x_1),$$
that is the extinction time for one patch.
\end{proposition}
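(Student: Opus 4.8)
The plan is to obtain matching upper and lower bounds for $\tau = \tau_{d,\kappa}$ as $d \to 0$ and to squeeze them to $H(x_1)$ using the comparison principle of Theorem \ref{pmax}. Throughout, $\kappa$ is fixed.

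\medskip\noindent\textbf{Lower bound.} First I would note that $\underline\tau_d = (d+1) H(z)$ with $z = \frac{x_1 + d x_2}{1+d}$ is a subsolution to $-L_d \tau = 1$ by Theorem \ref{compex}, hence $\underline\tau_d \le \tau_{d,\kappa}$ by comparison. As $d \to 0$ we have $z \to x_1$ and $(d+1) \to 1$, so by continuity of $H$ on $[0,1]$,
\begin{equation}\label{lowerbd}
\liminf_{d\to 0} \tau_{d,\kappa}(x) \ge \lim_{d\to 0}(d+1)H\!\left(\frac{x_1+dx_2}{1+d}\right) = H(x_1).
\end{equation}

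\medskip\noindent\textbf{Upper bound.} For this direction I would look for a supersolution to $-L_d \tau = 1$ that converges to $H(x_1)$. The natural candidate is $W_d(x) = H(x_1) + d\,\phi(x)$ for a suitable smooth correction $\phi$ chosen to absorb the error. Using $-\frac{x_1(1-x_1)}{2} H''(x_1) = 1$ and computing $L_d$ on $W_d$, the $\frac{x_1(1-x_1)}{2} \partial_{x_1x_1}$ term reproduces $1$, while the terms $\frac{x_2(1-x_2)}{2d}\partial_{x_2x_2}$ and $-\kappa M x \cdot \nabla$ applied to $H(x_1)$ produce, respectively, a term of order $1/d$ (only if $\phi$ depends on $x_2$, which forces $\phi$ to be chosen $x_2$-independent or with the right structure) and the term $\kappa d (x_1 - x_2) H'(x_1)$, which is $O(d)$ but blows up near $x_1 \in \{0,1\}$ because $H'(x_1) = -2\ln\frac{x_1}{1-x_1}$ is unbounded. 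The key technical point is that $H$ is not in $D(L_d)$ up to the corners; I would handle this exactly as in the proof of Theorem \ref{pmax}, by adding the barrier $\varepsilon \psi + \varepsilon^2\theta + \varepsilon^3$ with $\psi(x) = (x_1 + d x_2)(d+1-x_1-dx_2)$ and $\theta(x) = (x_1-x_2)^2$, whose $L_d$-image is strictly negative and which is bounded, then letting $\varepsilon \to 0$. So the supersolution is really $W_{d,\varepsilon}(x) = H(x_1) + \varepsilon \psi(x) + \varepsilon^2 \theta(x) + \varepsilon^3 + (\text{lower-order correction in }d)$; comparison gives $\tau_{d,\kappa} \le W_{d,\varepsilon}$, and taking $\varepsilon \to 0$ then $d \to 0$ yields $\limsup_{d\to 0}\tau_{d,\kappa}(x) \le H(x_1)$.

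\medskip\noindent Combining this with \eqref{lowerbd} proves $\lim_{d\to 0}\tau_{d,\kappa}(x) = H(x_1)$ pointwise on $\mathcal D$ (with the convention that at the corners both sides vanish). The main obstacle I anticipate is controlling the singular behaviour of $H'$ and $H''$ near the boundary segments $x_1 = 0$ and $x_1 = 1$: the naive estimate $-L_d(H(x_1) + dx_2) = 1 + O(d)$ is only valid away from those segments, and making the supersolution argument rigorous uniformly up to the boundary is where the barrier functions $\psi, \theta$ and a careful case analysis on the location of the interior minimum of $v$ — precisely as carried out in Section \ref{maisonmarie} — are needed. One should also check that the error term produced by the $x_2$-diffusion forces the correction to $H(x_1)$ to be independent of $x_2$ (or chosen so that $\partial_{x_2x_2}$ of the correction is controlled after multiplication by $1/d$), which constrains the admissible form of $\phi$ but does not obstruct the argument.
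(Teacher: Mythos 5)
Your lower bound ($\underline\tau\le\tau$ by Theorem \ref{compex}, and $(d+1)H(z)\to H(x_1)$) is exactly the paper's, and is fine. The gap is in the upper bound: the supersolution you need is never actually constructed, and the device you invoke to repair the boundary singularity cannot do the job. Writing $W_d=H(x_1)+d\phi+\varepsilon\psi+\varepsilon^2\theta+\varepsilon^3$, the drift produces $-L_dH(x_1)=1+2\kappa d\,(x_1-x_2)\ln\frac{1-x_1}{x_1}$, which near the segment $x_1=0$ (with $x_2>0$) behaves like $1-2\kappa d\,x_2\ln\frac{1}{x_1}\to-\infty$ for every fixed $d>0$: the deficit is \emph{unbounded}. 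The barrier of Section \ref{maisonmarie} contributes only $-L_d(\varepsilon\psi+\varepsilon^2\theta)=\varepsilon\bigl(x_1(1-x_1)+dx_2(1-x_2)\bigr)+O(\varepsilon^2)$, which is bounded and, worse, degenerates to $O(\varepsilon d)$ precisely on the segments $x_1\in\{0,1\}$ where the deficit blows up; in Theorem \ref{pmax} that barrier only serves to make inequalities strict at a minimum point, not to compensate a negative source term. So "adding the barrier and letting $\varepsilon\to0$" does not produce a supersolution, and the whole difficulty is pushed into the unspecified correction $\phi$, i.e. into the very step that constitutes the proof.

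The paper circumvents this by never comparing $\tau$ with $H(x_1)$ directly. It compares $\tau$ with $\underline\tau$, for which the source is \emph{exact} and uniformly bounded: $-L_d(\tau-\underline\tau)=\frac{d(x_1-x_2)^2}{(1+d)^2z(1-z)}\le 1$ by \eqref{pouetpouet} (no logarithms appear, because $\underline\tau$ is annihilated by the drift, see \eqref{camion1}). It then builds a \emph{bounded} function $X=H(x_2)+x_1x_2^{d}+(1-x_1)(1-x_2)^{d}$ with $-L_dX\ge\frac{1}{2d}$ for $d$ small: the resource that makes this possible is the $\frac1d$ coefficient in the $x_2$-diffusion, and the power-law blow-up $x_2^{d-1}$, $(1-x_2)^{d-1}$ coming from $D$ dominates the logarithmic terms $\ln x_2$, $\ln(1-x_2)$ generated by the drift acting on $H(x_2)$. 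Comparison then gives $0\le\tau-\underline\tau\le 2d\,(H(x_2)+D)=O(d)$, and the conclusion follows since $\underline\tau\to H(x_1)$. If you want to keep your direct route, you must exhibit a concrete correction (necessarily $d$-dependent and singular-looking, e.g. built from terms whose $x_1$-diffusion image blows up like $\ln\frac1{x_1}$ while the functions themselves are $O(d)$), and verify it at the corners as well; as written, the argument has a hole exactly at its crux.
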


\begin{proof}
We begin with

\begin{equation}\label{camion5}
-L_d(\tau- \underline \tau) =\frac{d(x_1-x_2)^2}{(1+d)^2z(1-z)}.
\end{equation}

Let us observe that due to \eqref{pouetpouet}
\begin{equation}\label{super11}
\frac{d(x_1-x_2)^2}{(1+d)^2z(1-z)} \leq 1.
\end{equation}

\noindent The strategy is to seek a supersolution $X$ to the equation
$-L_d \tilde X =\frac{1}{d}$ that is bounded when $d$ converges to $0$.
We first have, using the entropy function $H_2(x_1,x_2)=H(x_2)$

\begin{equation}\label{super12}
-L_d H_2=\frac1d+2\kappa(x_1-x_2)\ln\frac{x_2}{1-x_2}\geq \frac1d+2\kappa(x_1\ln x_2+(1-x_1)\ln(1-x_2)).
\end{equation}

\noindent Setting $D(x_1,x_2)=x_1x_2^d+(1-x_1)(1-x_2)^d$, we have

\begin{equation}\label{super13}
	-L_d D= \frac{1-d}{2}(x_2^{d-1}x_1(1-x_2)+(1-x_2)^{d-1}x_2(1-x_1))-\kappa d(x_1-x_2)^2(x_2^{d-1}+(1-x_2)^{d-1}).
\end{equation}

	\noindent Therefore, since we have
$$ \frac{1-d}{2}x_2^{d-1}x_1(1-x_2)-\kappa d(x_1^2-2x_1x_2+x_2^2) x_2^{d-1}\geq x_2^{d-1}x_1\left(\frac{1-d}{2}-\kappa d \right)-\frac{1-d}{2}-\kappa d, $$

\noindent we obtain, for $d$ small enough to have $(1+2\kappa )d<1$,

	$$-L_d D\geq -1 -2d\kappa + \frac{1-(1+2\kappa) d}{2}\left(x_1x_2^{d-1}+(1-x_1)(1-x_2)^{d-1} \right).$$

Gathering this inequality with \eqref{super12} and chosing
$d$ small enough such that $\frac{1-(1+2\kappa) d}{2} \geq \frac14$ holds true,
we then have

\begin{equation}\label{patatedouce}
-L_d( H_2+D)\geq  (\frac1d -1 -2d\kappa ) +x_1 ( 2\kappa \ln x_2+\frac{ x_2^{d-1}}{4} )
+ (1-x_1) (2\kappa \ln(1-x_2)+ \frac { (1-x_2)^{d-1}}{4}).
\end{equation}

\noindent Using the estimate
	
$$ \frac{1}{4x_2^{1-d}}+2\kappa \ln x_2\geq \frac{2\kappa}{1-d}(1+\ln(\frac{8\kappa}{1-d}). $$

\noindent we have that if $d$ is small enough depending on $\kappa$
then $-L_d( H_2+D)\geq \frac{1}{2d}$. Using the comparison principle
we then have that

\begin{equation}\label{super14}
0\leq \tau- \underline \tau \leq 2d(H_2+D),
\end{equation}

\noindent and we conclude by letting $d$ converge to $0$ since
$\underline \tau$ converges towards $H(x_1)$.
\end{proof}

\section{Miscellaneous results and comments}

\subsection{Discussion and prospects for ecological issues}\label{discussion}

To begin with, we have introduced a split-step model
that balances between the local reproduction of species
and the exchange process between patches. This split-step
model at a mesoscopic scale converges towards
a diffusion model whose drifts terms come from
the exchanges. This has been also observed for
instance in \cite{wakeley}.

Here we deal with a neutral metacommunity model
with no exchange with an external pool.
Hence the dynamics converge to a fixation on a single species
for large times. The average time to extinction of species is therefore an indicator of biodiversity.
Here for our simple neutral model, Theorem \ref{compex} provides a strong reckon that the exchange process is good for the biodiversity.
In some sense, the presence of two patches allows each species to establish itself
during a larger time lapse.

In a forthcoming work we plan to numerically
study a similar model but with more than two patches and several species.
We plan also to calibrate this model with data
measured in the south part of Hauts-de-France.
The main interest is to assess the role of ecological corridors to maintain biodiversity in an area.
 The question of the benefit of maintaining hedges arises when the agricultural world works for their removal to enlarge the cultivable plots.
This is one of the issue addresses by the Green and Blue Frame in Hauts-de-France.

\subsection{Convergence un distribution}\label{proba}

We address here the convergence in law/in distribution
of the infinite dimensional processes related to the $x_N^t$.
This is related to the convergence of the process towards
the solution of a stochastic differential equations; we will not develop this here.
Following \cite{stroock} or \cite{kallenberg},
it is sufficient to check the tightness of the process and
the convergence of the finite m-dimensional law.

Dealing with $\overline x_N^t$ instead of $x_N^t$,
the second point is easy. Indeed, Theorem \ref{limit} implies
 the convergence of the m-dimensional law for $m=1$. We can extend the result for arbitrary $m$ by induction using the Markov property.
For the tightness, we use the so-called Kolmogorov criterion
that is valid for continuous in time processes
(see \cite{stroock} chapter 2 and \cite{kallenberg} chapter 14);
this criterion reads in our case

\begin{equation}\label{kolmogorov}
\mathbb{E}( |x^s_N-  x^t_N|^4)\leq C |t-s|^2.
\end{equation}

This is a consequence of the following discrete estimate,
since $x^t_N$ is piecewise linear with respect to $t$,

\begin{proposition}\label{k}
There exists a constant $C$ such that for any $m<n$
\begin{equation}\label{keq}
\mathbb{E}( |x^n-x^m|^4)\leq C \frac{|n-m|^2}{N^2}.
\end{equation}
\end{proposition}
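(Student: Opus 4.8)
The plan is to imitate the second-moment computation from Lemma \ref{mom2} and its corollary, but carry it to the fourth moment, writing the increment over a block as a telescoping sum and expanding. Set $y^j = x^{j+1}-x^j$ for $m \le j \le n-1$, so that $x^n - x^m = \sum_{j=m}^{n-1} y^j$. Expanding the fourth power of the $\ell^2$-norm in $\R^2$ gives
\begin{equation}\label{k-expand}
\mathbb{E}(|x^n-x^m|^4) = \sum_{i,j,k,\ell} \mathbb{E}\big( (y^i,y^j)(y^k,y^\ell) \big),
\end{equation}
where all four indices range over $\{m,\dots,n-1\}$. I would organize the $(n-m)^4$ terms by how many distinct indices occur and by the pattern of coincidences, and estimate each group. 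The two elementary facts available are: first, $\mathbb{E}(|y^j|^2 \mid x^j) = O(N^{-1})$ uniformly, which is exactly \eqref{chile2} from Lemma \ref{mom2} (this also bounds $\mathbb{E}(|y^j|^4 \mid x^j)$ by $O(N^{-2})$ since $|y^j| \le |x^{j+1}| + |x^j|$ is bounded, so $|y^j|^4 \le C|y^j|^2$); second, $\mathbb{E}(y^j \mid x^j) = (A-Id)x^j$ with $\|A-Id\| \le CN^{-1}$, the near-martingale property from Lemma \ref{martin}.

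The key mechanism is conditioning on the largest index. If $j^* = \max(i,j,k,\ell)$ is strictly larger than the other three, then conditioning on the $\sigma$-algebra generated by the chain up to time $j^*$ and using $\mathbb{E}(y^{j^*}\mid \mathcal{F}_{j^*}) = (A-Id)x^{j^*}$ produces a factor $O(N^{-1})$ from $\|A-Id\|$, and then one more index can be peeled off similarly or bounded by boundedness of the increments; the remaining sum over the lower indices is controlled by a further $O(N^{-1})$ via Cauchy–Schwarz together with the second-moment bound. The terms where indices coincide in pairs — patterns like $i=j$, $k=\ell$ with $i\neq k$, or $i=k$, $j=\ell$ — are the "diagonal" contributions: each such pair contributes $\mathbb{E}(|y^i|^2)\,\mathbb{E}(|y^k|^2) = O(N^{-2})$ after conditioning, and there are $O((n-m)^2)$ such pairs, giving the claimed $O((n-m)^2 N^{-2})$. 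Terms with three or four equal indices are even smaller, being bounded by $(n-m)\,O(N^{-2})$ using $\mathbb{E}(|y^j|^4) = O(N^{-2})$. Summing the contributions: the genuinely "off-diagonal" sums are $O((n-m)^2 N^{-2})$ because each removal of a maximal index costs a factor $N^{-1}$ and there are effectively two free indices left, each summed over a range of length $n-m$; the diagonal sums are $O((n-m)^2 N^{-2})$ as just noted. Combining gives \eqref{keq}.

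The main obstacle is the bookkeeping in \eqref{k-expand}: one must argue carefully that every index class really does collapse to at most two "free" summations each of size $n-m$, with every extra index absorbed into a factor of $N^{-1}$ (from $\|A-Id\|$ or from $\mathbb{E}|y^j|^2$), rather than contributing another factor of $n-m$. The cleanest way is to always condition on the present maximal index and use the Markov property to reduce the depth of the sum by one, invoking Cauchy–Schwarz in $\ell^2(\R^2)$ to handle cross terms such as $\mathbb{E}(y^i,(A-Id)x^j)$ exactly as in \eqref{rhum2}. Once the estimate \eqref{keq} is in hand, the continuous-time bound \eqref{kolmogorov} follows from the piecewise-linearity of $t \mapsto x^t_N$ in $t$ and the scaling $N\delta t = 1$, exactly as indicated in the text, which in turn yields tightness via the Kolmogorov criterion.
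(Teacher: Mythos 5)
Your overall strategy (expand $\mathbb{E}|x^n-x^m|^4$ over the increments $y^j$, use the near-martingale property and Bernstein moment bounds) is in the right spirit, but the step you yourself flag as ``the main obstacle'' is exactly where the argument does not close, and the paper's key device for closing it is missing. The dangerous terms are those in which the largest index, say $\ell$, appears exactly once. Conditioning on the chain up to time $\ell$ replaces $y^\ell$ by $(A-Id)x^\ell$ and yields one factor $N^{-1}$, but it leaves \emph{three} free indices, not two, and no further peeling is available: the factor $x^\ell$ is not measurable with respect to the past of the next-largest index, so the Markov/conditioning trick cannot be iterated to extract a second $N^{-1}$. Counting crudely, there are of order $(n-m)^4$ such quadruples and each is only $O\bigl(N^{-1}\cdot N^{-3/2}\bigr)$, giving $O\bigl((n-m)^4N^{-5/2}\bigr)$, which exceeds the target $(n-m)^2N^{-2}$ as soon as $n-m\gg N^{1/4}$ --- and the relevant regime is $n-m$ up to $TN$. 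The paper avoids this entirely by first performing a Doob-type correction: it sets $z^n=x^n+B\sum_{k<n}x^k$ with $B=\frac{\kappa}{N}M$, which is a \emph{true} martingale, notes that $|(z^n-x^n)-(z^m-x^m)|\le C\frac{n-m}{N}$, and then expands the fourth moment of $z^n-z^m$; for the martingale the single-maximal-index terms vanish identically (see \eqref{k3}), and the remaining sums are controlled by collapsing $\sum_{i,j<k}(y^i,y^j)=|z^k-z^m|^2$ together with the orthogonality bound $\mathbb{E}|z^k-z^m|^2\le C\frac{k-m}{N}$, as in \eqref{k6}--\eqref{k7}. Without this correction (or some substitute such as a bootstrap/Gronwall argument on $\mathbb{E}|x^\ell-x^m|^4$ exploiting the collapse of the inner sums), your ``each removal of a maximal index costs $N^{-1}$'' bookkeeping does not give \eqref{keq}.

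There is also a quantitative error in your moment bounds: you justify $\mathbb{E}(|y^j|^4\mid x^j)=O(N^{-2})$ by writing $|y^j|^4\le C|y^j|^2$ and invoking \eqref{chile2}, but that only yields $O(N^{-1})$, which is insufficient (take $n=m+1$: the claim \eqref{keq} demands $O(N^{-2})$ for a single increment). The correct bound is the fourth \emph{central} moment estimate for the binomial sampling step, obtained in the paper from the explicit Bernstein moments $B_N(X^3)$, $B_N(X^4)$, giving $B_N((X-x)^4)\le CN^{-2}$ as in \eqref{keyce}; the desired bound is true, but your shortcut does not prove it, and the $O(N^{-1})$ version would also wreck the diagonal counts in your scheme.
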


\begin{proof}

\noindent First step: using that $x^n$ is close to a true martingale.

Let us set $A=Id-\frac{\kappa}{N}M=Id-B$.
Introduce $z^0=x^0$ and $z^n=x^n+B\sum_{k<n} x^k$.
Then since $\mathbb{E}(x^{n+1} |x^n)=x^n-Bx^n,$
we have that $z^n$ is a martingale.
Moreover we have the estimate, for $0\leq m<n$

\begin{equation}\label{point}
|(z^n-x^n)-(z^m-x^m)|\leq (n-m)||B||\leq C \frac{n-m}{N}.
\end{equation}

\noindent Second step: computing the fourth moment.

To begin with we observe that, due to \eqref{point}

\begin{equation}\label{k1}
|x^n-x^m|^4\leq 4\left(|z^n-z^m|^4+C (\frac{n-m}{N})^4  \right).
\end{equation}

\noindent Therefore we just have to prove that \eqref{keq} is valid
with $z^n$ replacing $x^n$. We introduce the increment $y^j=z^{j+1}-z^j$.
We then expand as follows, setting $|.|$ and $(.,.)$ respectively
for the euclidian norm and the scalar product in $\mathbb{R}^2$.

\begin{equation}\label{k2}
\mathbb{E}(|z^n-z^m|^4)=\sum_{i,j,k,l}\mathbb{E}\left( (y^i,y^j)(y^k,y^l)\right).
\end{equation}

\noindent Since $y^l$ is independent of the past, if
for instance $l>\max(i,j,k)$ then $\mathbb{E}((y^i,y^j)(y^k,y^l))=0$.
Therefore, \eqref{k2} reads also

\begin{equation}\label{k3}
\begin{split}
\mathbb{E}(|z^n-z^m|^4)=2\sum_{i,j<k} \mathbb{E}\left( (y^i,y^j)|y^k|^2\right)+4\sum_{i,j<k}\mathbb{E}\left( (y^i,y^k)(y^j,y^k)\right)+\\
4\sum_{i<k} \mathbb{E}\left((y^i,y^k)|y^k|^2\right)+\sum_k \mathbb{E}( |y^k|^4)=D_1+D_2+D_3+D_4.
\end{split}
\end{equation}

\noindent Third step: handling $D_4$ and $D_3$.

The key estimate reads as follows

\begin{equation}\label{keyce}
\mathbb{E}(|y^k|^4|x^k)\leq C N^{-2}.
\end{equation}

\noindent Let us check that \eqref{keyce} is valid. Due to the very properties
of Bernstein polynomials we know that $B_N(1)=1, B_N(X)=x, B_N(X^2)=x^2+\frac{x(1-x)}{N}$
and that $B_N(X^3)=x^3+\frac{3x^2(1-x)}{N}+0(\frac{1}{N^2})$ and
$B_N(X^4)=x^4+\frac{6x^3(1-x)}{N}+0(\frac{1}{N^2})$.
Therefore $B_N((X-x)^4)\leq C N^{-2}$ and since for any function $h$ we have that $\mathbb{E}(h(x^{k+1})|x^k)=h(Ax^k)$
then, due to the very definition of $z^k$

\begin{equation}\label{k4}
\mathbb{E}(|y^k|^4)\leq 4\left( \mathbb{E}(|x^{k+1}-x^k|^4)+\frac{C}{N^4}\right)=O(N^{-2}).
\end{equation}

\noindent Therefore $D_4= O( (n-m)N^{-2})$ and then the result.

For $D_3$ thanks to H\"older inequality, we have the estimate

\begin{equation}\label{k5}
D_3=4\sum_{j<k} \mathbb{E}\left((y^j,y^k)|y^k|^2\right)\leq
 C(n-m) D_4= O( (n-m)^2 N^{-2}).
\end{equation}

\noindent Fourth step: handling $D_1$ and $D_2$.

Using the conditional expectation we have

{
\begin{equation}\label{k6}\begin{split}
D_1=2\sum_{i,j<k} \mathbb{E}\left( (y^i,y^j)|y^k|^2\right)=2\sum_{i,j<k} \mathbb{E}\left( (y^i,y^j)\mathbb{E}\left(|y^k|^2|x^k\right)\right)
\\= 2\sum_{m<k\leq n} \mathbb{E}\left(|z^m-z^k|^2\mathbb{E}(|y^k|^2|x^k)\right).
\end{split}\end{equation}

\noindent Due to \eqref{keyce} and Cauchy-Schwarz inequality
$\mathbb{E}(|y^k|^2 |x^k))=O(N^{-1})$ and it follows

\begin{equation}\label{k7}
D_1\leq C N^{-1} \sum_{k} \mathbb{E}( |z^m-z^k|^2)= C N^{-1} \sum_{k}( \sum_j \mathbb{E}( |y^j|^2)) \leq C N^{-1} \sum_{k} \frac{k-m}{N}\leq  C N^{-2} (n-m)^2.
\end{equation}
}
We now handle $D_2$ exactly as we did for $D_1$. This completes the proof.
\end{proof}

\section*{Acknowledgments}
This work partakes of the research program PEGASE "Percolation et Graphes Al\'eatoires
pour les Syst\`emes Ecologiques" that aims a better understanding
of the role of ecological corridors in biodiversity.
PEGASE is supported by R\'egion Hauts-de-France and FEDER funding.
We also acknowledge the support of CNRS throught MONACAL Prime80's grant.
O.G. is also supported by Labex CEMPI (ANR-11-LABX-
0007-01). The authors thank the Referees for careful reading and useful comments.

\end{document}